\documentclass{amsart}
\usepackage[utf8]{inputenc}
\usepackage[T1]{fontenc}
\usepackage{lmodern}
\usepackage{enumerate}
\usepackage{amssymb,amsmath,amscd,amsthm,mathtools,nicefrac,textcomp,microtype}
\mathtoolsset{mathic}
\usepackage{etex,tikz,tikz-cd}
\usepackage{graphicx,color}
\usepackage{booktabs,array}

\usepackage{paralist}
\usepackage[font=small,margin=2em]{caption}
\usepackage{chngpage}
\usepackage[all]{xy}
\usepackage{url}
\usepackage[draft=false,breaklinks=true,hidelinks=true]{hyperref}  
\usepackage[nameinlink,capitalise,noabbrev]{cleveref} 

\newcommand{\RR}{\mathbb{R}}
\newcommand{\QQ}{\mathbb{Q}}
\newcommand{\CC}{\mathbb{C}}

\newcommand{\ZZ}{\mathbb{Z}}
\newcommand{\NN}{\mathbb{N}}

\newcommand{\SU}{\operatorname{SU}}

\newcommand{\rKO}{\reduced{\mathrm{KO}}}
\newcommand{\rKSpin}{\reduced{\mathrm{KSpin}}}
\newcommand{\reduced}[1]{\smash{\widetilde{#1}}}

\newcommand{\Rep}{\mathrm{Rep}}
\newcommand{\Vect}{\mathrm{Vect}}

\DeclareMathOperator{\diag}{diag}

\newcommand{\abs}[1]{|#1|}

\renewcommand{\mod}{\text{ mod }}

\newtheorem{theorem}{Theorem}                   
\newtheorem{corollary}[theorem]{Corollary}      
\newtheorem{proposition}[theorem]{Proposition}  
               \crefname{maintheorem}{Theorem}{Theorems}

\crefname{innercustomgeneric}{Theorem}{Theorems}
\providecommand{\customgenericname}{}
\newcommand{\newcustomtheorem}[2]{%
  \newenvironment{#1}[1]
  {%
   \renewcommand{\customgenericname}{#2}%
   \renewcommand{\theinnercustomgeneric}{##1}%
   \innercustomgeneric
  }
  {\endinnercustomgeneric}
}
\newcustomtheorem{customtheorem}{Theorem}

\theoremstyle{definition}
\newtheorem{definition}[theorem]{Definition}    

\theoremstyle{remark}
\newtheorem{remark}[theorem]{Remark}            
\newtheorem{example}[theorem]{Example}          
\newtheorem{examples}[theorem]{Examples}        
\newtheorem{statistics}[theorem]{Statistics}    

\makeatletter
\newcommand\fg{f.\,g.\@ifnextchar){}{\ }} 
\newcommand\fd{f.\,d.\@ifnextchar){}{\ }} 
\makeatother
\newcommand{\ie}{i.\,e.\ }

\hyphenation{
  homeo-morphic
  homeo-morphism
  Eschen-burg
}


\title[Open manifolds with positively curved souls]{Open manifolds with non-homeomorphic positively curved souls}
\date{\today}

\author[D.~González-Álvaro]{David González-Álvaro}
\address{Université de Fribourg, Switzerland}
\thanks{D. G.-\'A. received support from SNF grant 200021E-172469, the DFG Priority Programme SPP 2026 (Geometry at Infinity), and from MINECO grants MTM2014-57769-3-P and MTM2014-57309-REDT}
\email{david.gonzalezalvaro@unifr.ch}

\author[M.~Zibrowius]{Marcus Zibrowius}
\address{Heinrich-Heine-Universität Düsseldorf, Germany}
\thanks{M. Z. was partially supported by the DFG Research Training Group GRK~2240 (Algebro-geometric Methods in Algebra, Arithmetic and Topology)}
\email{marcus.zibrowius@cantab.net}

\subjclass[2010]{53C21, 57R22}

\begin{document}

\begin{abstract}
  We extend two known existence results to simply connected manifolds with positive sectional curvature: we show that there exist pairs of simply connected positively-curved manifolds that are tangentially homotopy equivalent but not homeomorphic, and we deduce that an open manifold may admit a pair of non-homeomorphic simply connected and positively-curved souls.  Examples of such pairs are given by explicit pairs of Eschenburg spaces.  To deduce the second statement from the first, we extend our earlier work on the stable converse soul question and show that it has a positive answer for a class of spaces that includes all Eschenburg spaces.
\end{abstract}

\maketitle

\section{Introduction}
The Soul Theorem \cite{CG} determines the structure of an open manifold $N$ endowed with a metric $g$ of non-negative sectional curvature: there exists a closed totally convex submanifold $S$, called the soul, such that $N$ is diffeomorphic to the normal bundle of $S$. This soul may not be unique, but for a given metric \(g\) any two souls are isometric.  Our work is motivated then by the following question: if $N$ admits different non-negatively curved metrics $g_1,g_2$, what can be said about the corresponding souls $S_1,S_2$? For convenience we will say that \(S\) is a soul of \(N\) iff \(S\) is a soul of \((N,g)\) in the usual sense for \emph{some} metric \(g\) of non-negative sectional curvature.

Open manifolds with \emph{different} souls can be constructed in the following ways. It is well known that there exist $3$-dimensional lens spaces $L_1,L_2$ that are homotopy equivalent but not homeomorphic, and such that their products with $\RR^3$ are diffeomorphic \cite[\S\,2]{Mil61}.
Thus, the obvious product metrics on $L_1\times\RR^3\cong L_2\times\RR^3$  have two non-homeomorphic souls.
In a similar vein, all of the fourteen exotic $7$-dimensional spheres $\Sigma^7$ (\ie manifolds which are homeomorphic but not diffeomorphic to the standard sphere $\mathbb{S}^7$) admit non-negatively curved metrics (see \cite{GZ00} and the recent preprint \cite{GKS}), and they all become diffeomorphic after taking the product with $\RR^3$. Thus, the obvious product metrics yield fifteen non-diffeomorphic souls of $\mathbb{S}^7\times\RR^3$.

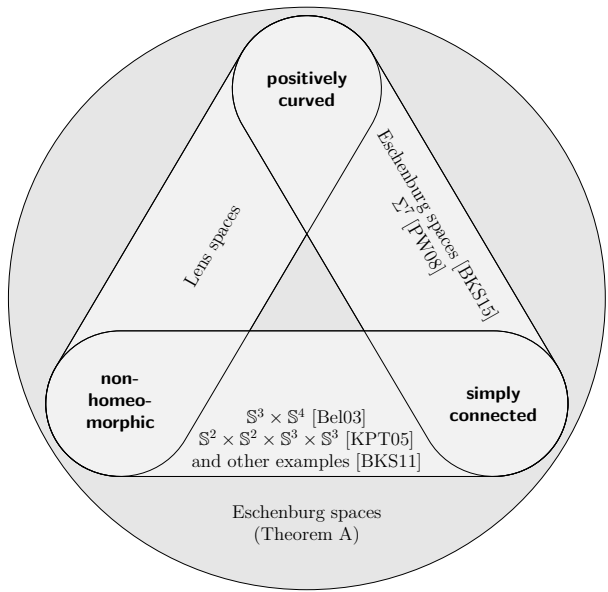
\begin{figure}
  \resizebox{8cm}{8cm}{\begin{tikzpicture}[font=\Large]
  \node at (0,0) (pc) {};
  \node at (-5cm,-8.66cm) (nh) {};
  \node at (5cm,-8.66cm) (sc) {};
  \node at (0,-5.77cm) (center) {};

  \node at (-2.5cm,-4.33cm) (pc-nh) {};
  \node at (2.5cm,-4.33cm) (pc-sc) {};
  \node at (0,-8.66cm) (nh-sc) {};

  \filldraw[fill=white!90!black] (center) circle (8cm);

  \tikzstyle{area} = [anchor=center,shape=rectangle,
                      minimum width=14cm,
                      minimum height=4cm,
                      rounded corners=2cm,
                      fill=white!95!black,draw=black,text opacity=1,
                      align=center,inner sep=0,font=\LARGE]
  \node[area,rotate around={ 60:(pc-nh)}] at (pc-nh)
     {Lens spaces};
  \node[area,rotate around={-60:(pc-sc)}] at (pc-sc)
     {Eschenburg spaces \cite{BKS15}\\\(\Sigma^7\) \cite{PW}\\~\\~\\~};
  \node[area,rotate around={  0:(nh-sc)}] at (nh-sc)
     {~\\~\\~\\~\\\(\mathbb{S}^3\times\mathbb{S}^4\) \cite{Bel}\\ \(\mathbb{S}^2\times\mathbb{S}^2\times\mathbb{S}^3\times\mathbb{S}^3\) \cite{KPT}\\and other examples \cite{BKS11}};
  \node[area,fill opacity=0,rotate around={ 60:(pc-nh)},draw=black] at (pc-nh) {};
  \node[area,fill opacity=0,rotate around={-60:(pc-sc)},draw=black] at (pc-sc) {};
  \node[area,fill opacity=0,rotate around={  0:(nh-sc)},draw=black] at (nh-sc) {};
  \node[align=center,font=\LARGE] at (0,-12) {Eschenburg spaces\\(\Cref{THM: nondiffeo souls with positive curv})};
  \tikzstyle{corner}=[font=\bf\sffamily\LARGE,align=center,text=black]
  \node[corner] at (pc) {\bf\sffamily positively\\\bf\sffamily curved};
  \node[corner] at (nh) {non-\\homeo-\\morphic};
  \node[corner] at (sc) {simply\\connected};
\end{tikzpicture}

  \caption{Existing results on pairs of distinct souls}\label{fig:main-result}
\end{figure}

In a more elaborate construction, Belegradek showed that $\mathbb{S}^3\times\mathbb{S}^4\times\RR^5$ admits infinitely many souls that are pairwise non-homeomorphic \cite{Bel}. In \cite{KPT} the same statement was shown over $\mathbb{S}^2\times\mathbb{S}^2\times\mathbb{S}^3\times\mathbb{S}^3\times\RR^k$ for any $k>10$, where the souls satisfy certain curvature-diameter properties. Finally, in  \cite{BKS11} further examples in the same vein were constructed with the additional property that the souls have codimension four.

Our main interest in this note is the existence of souls with \emph{positive} sectional curvature.  For example, the lens spaces described above have metrics with constant positive sectional curvature. Unpublished work by Petersen-Wilhelm \cite{PW} announces a positively curved metric on one of the exotic spheres $\Sigma^7$; this would yield two non-diffeomorphic souls with positive curvature on $\mathbb{S}^7\times\RR^3$.
It also follows from \cite{BKS15} that there exist open manifolds with pairs of non-diffeomorphic homeomorphic souls with positive curvature:
see \cref{THM: BKS15 nontrivial line bundles} below for the precise statement and its proof.
In all of the above examples, however, the pairs of souls satisfy at most two of the following three properties: they are simply connected, they are non-homeomorphic, they have positive sectional curvature. The situation is summarized in \cref{fig:main-result}. Here, we present open manifolds with pairs of souls that satisfy all three properties simultaneously:

\begin{customtheorem}{A}\label{THM: nondiffeo souls with positive curv}
There exist simply connected open manifolds with a pair of non-homeomorphic souls of positive sectional curvature.
\end{customtheorem}

In combination with results of \cite{KPT,BKS11}, \cref{THM: nondiffeo souls with positive curv}   yields some consequences on the topology of the moduli space of Riemannian metrics with non-negative sectional curvature on the corresponding spaces. This is explained in Section~\ref{sec: mmoduli spaces}.

\Cref{THM: nondiffeo souls with positive curv} will be proved in the following more explicit form:

\begin{customtheorem}{\ref{THM: nondiffeo souls with positive curv}\textquotesingle}\label{THM: construction for THM A}
There exist Eschenburg spaces $M$ with the following property:  the total space of every real vector bundle over $M$ of rank \(\geq 8\) admits a pair of non-homeomorphic souls of positive sectional curvature.
\end{customtheorem}

Of course, one of the souls is the given Eschenburg space \(M\); the other soul is a homotopy equivalent but non-homeomorphic Eschenburg space \(M'\).  Recall that Eschenburg spaces \cite{Esch82} form an infinite family of $7$-dimensional quotients of $SU(3)$ under certain circle actions.  They inherit non-negatively curved metrics from \(SU(3)\) which in many cases have positive sectional curvature (see \cref{sec: Esch} for details).  The only known examples of pairs of simply connected manifolds with positive curvature which are homotopy equivalent but non-homeomorphic occur among these Eschenburg spaces \cite{CEZ07,Sha02}.  On the other hand, there are only finitely many homeomorphism classes of Eschenburg spaces in each homotopy type \cite[Prop.~1.7]{CEZ07}, so our strategy behind proving \cref{THM: construction for THM A} cannot yield infinite families of non-homeomorphic souls.%
\medskip

This strategy is as follows.  We use the classical fact that \emph{the total spaces of a vector bundle of high rank and its pull-back under a tangential homotopy equivalence are diffeomorphic}.
Here, two manifolds \(M_1,M_2\) of the same dimension are called \emph{tangentially homotopy equivalent} if there exists a homotopy equivalence \(f\colon M_1\to M_2\) such that the tangent bundle \(TM_1\) and \(f^*TM_2\) are stably isomorphic, \ie such that \(TM_1\times\RR^k\) and \(f^*TM_2\times\RR^k\) are isomorphic as bundles over \(M_1\) for some integer \(k\geq 0\). Thus, \cref{THM: construction for THM A} is a consequence of the two following results, in which each Eschenburg space is understood to come equipped with some metric which descends from a circle invariant non-negatively curved metric on \(SU(3)\).

\begin{customtheorem}{B}\label{THM: existence of tangential nonhomeo pairs}
There exist pairs of positively curved Eschenburg spaces which are tangentially homotopy equivalent but not homeomorphic.
\end{customtheorem}

\begin{customtheorem}{C}\label{THM: bundles over Esch}
Let \(M\) be an Eschenburg space. The total space of every real vector bundle over $M$ of rank $\geq 8$ admits a metric with non-negative sectional curvature whose soul is isometric to $M$.
\end{customtheorem}

Explicit pairs of Eschenburg spaces as in \cref{THM: existence of tangential nonhomeo pairs} are listed in \cref{table:Esch:phenomena} below.  They constitute the first known examples of simply connected positively curved non-homeomorphic spaces that are tangentially homotopy equivalent.
On the other hand, any two homeomorphic Eschenburg spaces are in particular tangentially homotopy equivalent.  (This implication holds for many closed manifolds of dimension at most $7$; see \cref{rem: homeo type s mfds are tangentially homotopic}.)
Pairs of simply connected \emph{non-negatively} curved manifolds that are tangentially homotopy equivalent but not homeomorphic are already known:  Crowley exhibited an explicit such pair of $\mathbb{S}^3$-bundles over $\mathbb{S}^4$ \cite[p.\,114]{CPhD}, which carry metrics of non-negative sectional curvature by the work of Grove and Ziller \cite{GZ00}.

\medskip

\Cref{THM: bundles over Esch} should be seen in the context of the \emph{converse soul question}: does every vector bundle over a manifold with non-negative sectional curvature itself admit a metric of non-negative sectional curvature?  While this is known to be false for general base manifolds, very little is known about this question for simply connected bases.  Every vector bundle over a sphere $\mathbb{S}^n$ with $2\leq n\leq 5$ admits such a metric \cite{GZ00}, and there exist partial positive results over cohomogeneity-one four-manifolds \cite{GZ11}.
A stable version of the question is known to have an affirmative answer for all spheres \cite{Ri}, and also for many other families of homogeneous spaces including almost all the positively curved ones \cite{Go17,GoZ}.  On the other hand, there is not a single known example of a vector bundle over a simply connected non-negatively curved closed manifold whose total space admits no metric of non-negative sectional curvature.  Using the same techniques as in the proof of \cref{THM: bundles over Esch}, we can further extend the list of examples in which the converse soul question has a positive answer, at least after some form of stabilization:

\begin{customtheorem}{\ref{THM: bundles over Esch}\textquotesingle}\label{THM: bundles over other spaces}  Let $M$ be any of the closed manifolds listed below.  The total space of every real vector bundle over $M$ of rank $\geq r$ admits a metric of non-negative sectional curvature, where \(r\) depends on $M$ as listed:
\begin{compactitem}
\item Generalized Witten spaces \(M\) with \(H^4(M)\) of odd order (\(r = 8\)).
\item Generalized Witten spaces \(M\) with \(H^4(M)\) of even order (\(r = 18\)).
\item Products of spheres $\mathbb{S}^2\times \mathbb{S}^{m}$ with $m\equiv 3,5\mod 8$ (\(r = m + 3\)).
\item The total space of the unique non-trivial linear $\mathbb{S}^m$-bundle over $\mathbb{S}^2$ where either $m=3$ or $m\equiv 5\mod 8$ (in any case \(r = m + 3\)).
\end{compactitem}
\end{customtheorem}

The generalized Witten spaces appearing here are a family of manifolds \(M_{k,l_1,l_2}\) defined as quotients of \(\mathbb{S}^5\times \mathbb{S}^3\) under the circle action
\[
\begin{array}{ccc}
S^1 \times \mathbb{S}^5\times \mathbb{S}^3 & \rightarrow & \mathbb{S}^5\times \mathbb{S}^3  \\
\left(z ,( u_1,u_2,u_3), (v_1,v_2)\right) & \mapsto & \left(( z^k u_1,z^k u_2,z^k u_3), (z^{l_1} v_1,z^{l_2} v_2)\right)
\end{array}
\]
where \(\mathbb{S}^5\subset\CC^3\), \(\mathbb{S}^3\subset\CC^2\), and \(k,l_1,l_2\) are nonzero integers such that \(k,l_j\) are coprime for \(j=1,2\); for such a space \(H^4(M_{k,l_1,l_2})=\ZZ_{l_1 l_2}\). We refer to \cite{E05} for details.

The unifying feature of the examples appearing in \cref{THM: bundles over other spaces} is that the base manifolds come equipped with a principal \(S^1\)-bundle that carries an invariant metric of non-negative sectional curvature, and whose associated complex line bundle generates the Picard group of the base manifold.  The idea is then to show that any real vector bundle is stably equivalent to a sum of at most \(r/2\) complex line bundles.  See \cref{THM: nnc on vector bundles over type s} below for a general form of \cref{THM: bundles over Esch,THM: bundles over other spaces}.

Note that there are infinitely many manifolds in \cref{THM: bundles over Esch,THM: bundles over other spaces} that are not diffeomorphic to homogeneous spaces.
Indeed,  there are infinitely many spaces among Eschenburg and generalized Witten spaces that are not even homotopy equivalent to any homogeneous space \cite{Sha02, E05}.

\subsection*{Outline} The paper is organized as follows. All theorems above follow from a study of stable equivalence classes of real vector bundles over manifolds of dimension at most seven, with which we begin in \cref{sec:KO}.  \Cref{THM: bundles over Esch,THM: bundles over other spaces} are deduced in \cref{sec:curvature}.  In \cref{sec: Esch}, we use the results on stable equivalence classes to refine the homotopy classification of Eschenburg spaces due to Kruggel, Kreck and Stolz to a classification up to tangential homotopy equivalence.  A search for pairs as in \cref{THM: existence of tangential nonhomeo pairs} can then easily be implemented as a computer program. The code we use is briefly discussed at the end of \cref{sec: Esch}; we have made it freely available~\cite{zenodo}. \Cref{THM: nondiffeo souls with positive curv} is finally proved in \cref{sec: Proof of THM A}.  We close in \cref{sec: mmoduli spaces} with a brief discussion of implications for moduli spaces.

\subsection*{Notation} We write \(H^*(-)\) to denote (singular) cohomology with integral coefficients, i.e.\ \(H^*(X) := H^*(X,\ZZ)\).


\subsection*{Acknowledgements} This work grew out of a question by Wilderich Tuschmann to the first author, during the conference ``Curvature and Global Shape'' celebrated in M\"unster in July 2017.   We would like to thank Christine Escher for sharing unabridged versions of the lists of pairs of Eschenburg spaces published in \cite{CEZ07} with us, and Igor Belegradek and Anand Dessai for helpful comments on a preliminary version of this manuscript.  Numerous improvements where moreover suggested by an extremely diligent anonymous referee.


\section{Vector bundles over seven-manifolds}\label{sec:KO}

Two real vector bundles \(F\) and \(F'\) over a common base \(X\) are \textbf{stably equivalent} if \(F\oplus \RR^k\cong F' \oplus \RR^{k'}\) for certain integers \(k\) and \(k'\).
The main result of this section is that, over certain classes of \(7\)-manifolds, any real vector bundle is stably equivalent to a sum of complex line bundles.  See \cref{prop: sum of line bundles} for the precise statement and \cref{Rem: generalization higher dim} for slight generalizations.

Our calculations will make use of the \textbf{Spin characteristic class} \(q_1\) constructed by Thomas \cite{Thomas}.  Assume for the following brief discussion that our base \(X\) is a finite-dimensional connected CW complex.  A Spin bundle \(F\) over \(X\) is a real vector bundle whose first two Stiefel-Whitney classes \(w_1 F\) and \(w_2F\) vanish.  Equivalently, a real vector bundle \(F\) is a Spin bundle if and only if its classifying map \(f_F\colon X\to BO\) lifts to a map \(\hat f_F\colon X\to BSpin\).  The Spin characteristic class \(q_1 F \in H^4(X)\) of such a Spin bundle is defined as the pullback under \(\hat f_F\) of a distinguished generator of \(H^4(BSpin)\).  We will make frequent use of the following properties of the Spin characteristic class and its relation to the first Pontryagin class \(p_1\) and the Chern classes~\(c_1\) and \(c_2\).

\begin{proposition}\label{prop:q1}
 Let \(F\) and \(F'\) be two Spin bundles over \(X\), let \(E\) be a complex vector bundle over \(X\), and let \(rE\) be the underlying real vector bundle.
  \begin{compactenum}[(a)]
  \item \label{eq:q1:0}
    \(\phantom{2}q_1(F) = 0\)
    \quad if \(F\) is a trivial vector bundle.
  \item \label{eq:q1:+}
    \(\phantom{2}q_1(F\oplus F') = q_1 F +q_1 F'\)
  \item \label{eq:q1:p1}
    \(2q_1(F) = p_1(F)\)
    \quad --- ``The Spin class is half the Pontryagin class.''
  \item \label{eq:q1:2q1r}
    \(\phantom{2}p_1(rE) = (c_1E)^2-2c_2 E\)
  \item \label{eq:q1:q1r}
    \(\phantom{2}q_1(rE) = -c_2 E\)
    \quad if \(c_1 E = 0\).
  \end{compactenum}
\end{proposition}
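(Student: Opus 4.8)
The plan is to verify these five properties of the Spin characteristic class $q_1$ largely by reducing everything to the corresponding well-known properties of the first Pontryagin class $p_1$, using the fact that $H^4(BSpin)$ is free and that the map $H^4(BSpin) \to H^4(BSO)$ is injective with image $2\cdot H^4(BSO)$ (modulo torsion) — this is essentially the content of Thomas's construction \cite{Thomas}. The distinguished generator of $H^4(BSpin) \cong \ZZ$ is chosen precisely so that twice it maps to $p_1 \in H^4(BSO)$; this gives (c) immediately on the universal level, and hence for all Spin bundles by naturality. Property (a) then follows since a trivial bundle has classifying map factoring through a point.

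For (b), additivity, I would argue on the level of classifying spaces: the Whitney sum corresponds to the map $BSpin \times BSpin \to BSpin$ covering $BSO \times BSO \to BSO$, and since $2q_1$ pulls back to $p_1$ which is primitive (i.e.\ $p_1(F \oplus F') = p_1 F + p_1 F'$, a standard fact), the class $2q_1$ is additive; because $H^4(BSpin \times BSpin)$ is torsion-free in the relevant range (or at least because multiplication by $2$ is injective on $H^4(BSpin)$ — one should check $H^4(BSpin)$ has no $2$-torsion, which holds as $H^*(BSpin)$ is known), additivity of $2q_1$ forces additivity of $q_1$ itself. Alternatively one can cite that $q_1$ is known to be primitive directly from Thomas's work; I would use whichever is cleanest given what the paper wants to assume.

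Properties (d) and (e) concern complex bundles. Property (d) is the classical formula $p_1(rE) = c_1(E)^2 - 2c_2(E)$, which follows from $p_1(rE) = -c_2(E \otimes_\RR \CC) = -c_2(E \oplus \bar E)$ and the splitting principle; this is standard and I would simply cite it or give the one-line Chern-root computation. For (e): if $c_1 E = 0$ then $rE$ is a Spin bundle (its $w_1$ and $w_2$ vanish, since $w_2(rE)$ is the mod-$2$ reduction of $c_1 E$), so $q_1(rE)$ is defined, and by (c) combined with (d) we get $2q_1(rE) = p_1(rE) = c_1(E)^2 - 2c_2(E) = -2c_2(E)$. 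Dividing by $2$ is legitimate here because — and this is the one genuinely delicate point — we must know that $H^4(X)$ has no $2$-torsion that could obstruct the cancellation, OR, better, that the identity $2q_1(rE) = -2c_2(E)$ already holds universally over $BU(n)$ (with its torsion-free cohomology) before pulling back to $X$. Indeed $c_1$ vanishes on $BSU(n)$, and over $BSU(n)$ the cohomology $H^4(BSU(n)) \cong \ZZ$ is torsion-free, so the equation $2q_1(rE_{univ}) = -2c_2(E_{univ})$ there forces $q_1(rE_{univ}) = -c_2(E_{univ})$; naturality then gives (e) for every complex bundle with $c_1 = 0$ over any base.

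The main obstacle is precisely this division-by-$2$ bookkeeping: one has to be careful that $q_1$ is being characterized by an identity ($2q_1 = p_1$) that does not uniquely determine it over a general space $X$, so every computation of $q_1$ must ultimately be pushed back to a torsion-free universal space ($BSpin$, $BSU(n)$, or a product thereof) where the defining generator lives unambiguously, and only then transported to $X$ by naturality. Once that principle is set up, all five parts are short.
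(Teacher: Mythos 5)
Your proposal is correct and follows essentially the same route as the paper: the paper simply cites Thomas for parts (b) and (c) where you re-derive them from the universal characterization, and its proof of (e) — viewing \(q_1\circ r\) as a natural transformation on \([X,BSU]\), hence an element of \(H^4(BSU)=\ZZ c_2\), pinned down by evaluating on \(\mathbb{S}^4\) via (c) and (d) — is the same ``push the division by 2 to a torsion-free universal space and pull back by naturality'' idea as your argument over \(BSU(n)\).
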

For the last identity, note the \(rE\) is a Spin bundle if and only if the mod-2-reduction of \(c_1 E\) in \(H^2(X,\ZZ_2)\) vanishes.
In particular, the stated stronger condition \(c_1 E = 0\) implies that \(rE\) is a Spin bundle.
\begin{proof}
  The first claim is clear from the definition.
  For \eqref{eq:q1:+} and \eqref{eq:q1:p1}, see eqs~1.10 and 1.5 in Thm~1.2 of \cite{Thomas}.
  Claim~\eqref{eq:q1:2q1r} is a direct consequence of the definition of Pontryagin classes.
  Claim~\eqref{eq:q1:q1r} is immediate from \eqref{eq:q1:p1} and \eqref{eq:q1:2q1r} when \(H^4(X)\) contains no \(2\)-torsion, an assumption we will frequently make below.  To see that \eqref{eq:q1:q1r} also holds in general, note that stable equivalence classes of bundles with vanishing first Chern class are classified by \(BSU\).  So \(q_1\circ r\) defines a natural transformation \([X,BSU]\to H^4(X)\) and hence corresponds to an element of \(H^4(BSU)= \ZZ c_2\).
  To see which element it is, we can evaluate, say, on \(X=\mathbb{S}^4\) and then use~\eqref{eq:q1:2q1r}.
\end{proof}

\begin{proposition}\label{prop:w2p1}
  Suppose \(X\) is a connected CW complex of dimension~\(\leq 7\).  Then two Spin bundles \(F\), \(F'\) over \(X\) are stably equivalent if and only if their Spin characteristic classes agree.

  Suppose in addition that \(H^4(X)\) contains no \(2\)-torsion.  Then two real bundles \(F\), \(F'\) over \(X\) are stably equivalent if and only if their Stiefel-Whitney classes \(w_1\) and \(w_2\) and their first Pontryagin classes \(p_1\) agree.
\end{proposition}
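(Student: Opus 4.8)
Here is the plan I would follow, treating the two statements in turn.

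For the first statement, a Spin bundle over $X$ has a classifying map that lifts through $BSpin$, and from $\pi_i(BO)=\pi_{i-1}(O)$ one reads off that $BSpin$ is $3$-connected with $\pi_4(BSpin)\cong\ZZ$ and $\pi_i(BSpin)=0$ for $5\le i\le 7$. By Hurewicz and the universal coefficient theorem, $H^4(BSpin)\cong\ZZ$ with generator $q_1$, so $q_1$ represents a map $BSpin\to K(\ZZ,4)$ that is an isomorphism on $\pi_i$ for $i\le 7$ and onto on $\pi_8$; obstruction theory then upgrades this to a bijection $[X,BSpin]\xrightarrow{\sim}H^4(X)$ whenever $\dim X\le 7$, which is the first statement. (On a non-simply-connected $X$ the class $q_1$ of a Spin bundle depends a priori on the chosen $BSpin$-structure; this is immaterial here, and in any case a stable isomorphism transports one structure to the other.)

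For the second statement, only the ``if'' direction requires work, so assume $w_1F=w_1F'$, $w_2F=w_2F'$ and $p_1F=p_1F'$. The first step is to reduce to orientable bundles. In the abelian group $\rKO^0(X)=[X,BO]$ of stable isomorphism classes, the additive map $w_1\colon\rKO^0(X)\to H^1(X;\ZZ_2)$ has kernel the stable classes of orientable bundles. If $\lambda$ is the real line bundle with $w_1\lambda=w_1F$, then $[F]-[\lambda]$ and $[F']-[\lambda]$ lie in this kernel, so $F$ is stably equivalent to $G\oplus\lambda$ and $F'$ to $G'\oplus\lambda$ for orientable bundles $G,G'$ and the \emph{same} $\lambda$. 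Cancelling $\lambda$ inside the group $\rKO^0(X)$, it suffices to prove $G$ stably equivalent to $G'$. The Whitney-sum formulas, applied to the orientable bundles $G,G'$, give $w_2G=w_2F=w_2F'=w_2G'$ and $p_1G=p_1F=p_1F'=p_1G'$ (using $w_1G=w_1G'=0$, and that $w_2$ and $p_1$ of a real line bundle vanish). So we have reduced to: two orientable bundles over $X$ with the same $w_2$ and the same $p_1$ are stably equivalent, provided $\dim X\le 7$ and $H^4(X)$ has no $2$-torsion.

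To prove this, I would use the fibration $BSpin\to BSO\xrightarrow{w_2}K(\ZZ_2,2)$. Choose classifying maps $f,f'\colon X\to BSO$ for $G,G'$ and homotope $f'$ so that $w_2\circ f=w_2\circ f'=:\psi$; then $f,f'$ are lifts of $\psi$ along $BSpin\to BSO$. The base is simply connected, so the fibration is simple, and since $\pi_i(BSpin)=0$ for $i\le 3$ and $5\le i\le 7$, the only obstruction to a vertical homotopy $f\simeq f'$ is a single difference class $d(f,f')\in H^4(X;\pi_4BSpin)\cong H^4(X;\ZZ)$. Restricting $p_1\in H^4(BSO)$ to the fibre gives $2q_1$, with $q_1$ a generator of $H^4(BSpin;\ZZ)\cong\ZZ$ (the relation $p_1=2q_1$ of \cref{prop:q1}); the standard formula for the effect of a cohomology class on the difference of two lifts therefore gives $p_1G'-p_1G=2\,d(f,f')$ in $H^4(X;\ZZ)$. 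As $H^4(X)$ is $2$-torsion-free, $p_1G=p_1G'$ forces $d(f,f')=0$, hence $f\simeq f'$, hence $G$ and $G'$ are stably equivalent.

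The step I expect to need the most care is the reduction to orientable bundles: one cannot cancel a line bundle among honest vector bundles, so it is essential to work inside the group $\rKO^0(X)$ and to verify that the characteristic classes of the orientable summand are pinned down. After that, everything is routine obstruction theory, the only further ingredient being the identity $p_1=2q_1$ together with the $2$-torsion-free hypothesis on $H^4(X)$ --- precisely what turns ``equal first Pontryagin classes'' into ``vanishing difference class''.
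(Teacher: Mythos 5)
Your proof is correct, but it follows a genuinely different route from the paper, which outsources both key inputs to Duan--Li: the first statement is exactly \cite[Cor.~1]{LiDuan} (that \(q_1\colon\rKSpin(X)\to H^4(X)\) is an isomorphism for \(\dim X\leq 7\)), and the second is deduced there from the exact sequence \(0\to\rKSpin(X)\to\rKO(X)\xrightarrow{(w_1,w_2)}H^1(X,\ZZ_2)\times H^2(X,\ZZ_2)\to 0\) by forming the difference \(F-F'\), which is a Spin class with \(q_1(F-F')=0\) by \(2q_1=p_1\), the Whitney formula up to \(2\)-torsion, and the torsion hypothesis. You instead reprove the first statement directly, observing that \(q_1\colon \BSpin\to K(\ZZ,4)\) is an \(8\)-equivalence, and you handle the non-Spin case by killing \(w_1\) with a line bundle inside \(\rKO(X)\) and then comparing the two classifying maps \(f,f'\colon X\to \mathrm{BSO}\) as lifts of \(\psi\) along the fibration \(w_2\colon \mathrm{BSO}\to K(\ZZ_2,2)\) with fibre \(\BSpin\) (your phrase ``lifts of \(\psi\) along \(\BSpin\to \mathrm{BSO}\)'' should be read this way); the single difference obstruction in \(H^4(X;\pi_4\BSpin)\) is then killed by \(p_1|_{\BSpin}=2q_1\) together with the absence of \(2\)-torsion, and your use of the Whitney formula for \(p_1(G\oplus\lambda)\) is even exact, since \(c_1(G\otimes\CC)=\beta w_1(G)=0\) for orientable \(G\). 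What your route buys is self-containedness: no appeal to \cite{LiDuan}, no twisted group structure on \(H^1\times H^2\), and no need to know that \(q_1\) is additive on \(\rKSpin(X)\) in the non-Spin step; what the paper's route buys is brevity and a K-theoretic formulation (the exact sequence and the \(q_1\)-isomorphism) that is reused in the proof of \cref{prop: sum of line bundles}. The one point both treatments leave implicit, and which you rightly flag, is that \(q_1\) of a Spin bundle is independent of the chosen Spin structure (needed for the first statement to be unambiguous as stated); this is true --- the restriction of \(q_1\) to the fibre \(K(\ZZ_2,1)\) of \(\BSpin\to\mathrm{BSO}\) vanishes, as one checks for instance by restricting along \(B\ZZ_2\to B\!\Sp(1)=\BSpin(3)\), where \(q_1=-2c_2\) --- and it is part of what \cite{Thomas,LiDuan} provide.
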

The distinction of cases here is necessary because, in contrast to \(w_1\), \(w_2\) and \(p_1\), the Spin characteristic class \(q_1\) is not defined for arbitrary real vector bundles.
\begin{proof}
  Let \(\rKO(X)\) denote the reduced real K-group of \(X\), \ie the group of stable equivalence classes of real vector bundles over \(X\). (For background, see for example \cite{Hu}.)  Let \(\rKSpin(X)\) denote the subgroup of stable equivalence classes of Spin bundles.
  Equations \eqref{eq:q1:0} and \eqref{eq:q1:+} of the previous proposition show that \(q_1\) defines a homomorphism \(q_1\colon \rKSpin(X)\to H^4(X)\).
  By \cite[Cor.~1]{LiDuan}, this homomorphism is an isomorphism for \(X\) of dimension at most seven, so the claim follows.

  In general, \(\rKSpin(X)\) and \(\rKO(X)\) fit into a short exact sequence as follows \cite[Rem.~2]{LiDuan}:
  \[
    0 \to \rKSpin(X) \lhook\joinrel\longrightarrow \rKO(X) \xrightarrow{(w_1,w_2)} H^1(X,\ZZ_2) \times H^2(X,\ZZ_2) \to 0
  \]
  Here, the group structure on \(H^1(X,\ZZ_2) \times H^2(X,\ZZ_2)\) is defined such that the map \((w_1,w_2)\) is a homomorphism. Given two real vector bundles \(F\) and \(F'\) whose Stiefel-Whitney classes \(w_1\) and \(w_2\) agree, we obtain an element \(F-F'\in \rKO(X)\) that lies in the kernel of \((w_1,w_2)\) and hence in \(\rKSpin(X)\).  If furthermore \(p_1(F) = p_1(F')\), we find that \(p_1(F-F') = 0\) because the Whitney sum formula holds for Pontryagin classes up to \(2\)-torsion \cite[Thm~15.3]{MilnorStasheff} and because we have assumed that \(H^4(X)\) does not contain any such torsion.
  Using \cref{prop:q1}\,\eqref{eq:q1:p1} and the same assumption on \(H^4(X)\),  we deduce that \(q_1(F-F')=0\).  As we saw in the first part of the proof, this implies that \(F-F'=0\) in \(\rKSpin(X)\).   So \(F\) and \(F'\) are stably equivalent.
\end{proof}
As \(q_1\) is a homeomorphism invariant \cite[1.1\slash Rem.~2.1]{CN}, and as Stiefel-Whitney classes are even homotopy invariants, the above proposition implies:
\begin{corollary}\label{rem: homeo type s mfds are tangentially homotopic}
 Any two homeomorphic closed Spin manifolds of dimension~\(\leq 7\) are tangentially homotopy equivalent.  Similarly, any two homeomorphic closed manifolds of dimension~\(\leq 7\) for which \(H^4(-)\) contains no \(2\)-torsion are tangentially homotopy equivalent.
\end{corollary}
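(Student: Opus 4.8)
The plan is to show directly that every homeomorphism $h\colon M_1\to M_2$ between closed smooth manifolds of dimension~$\leq 7$ — both Spin in the first assertion, with $H^4(-)$ free of $2$-torsion in the second — is a tangential homotopy equivalence. As $h$ is in particular a homotopy equivalence, the only thing to check is that the real vector bundles $TM_1$ and $h^*TM_2$ over $M_1$ are stably isomorphic, which is exactly the kind of statement \cref{prop:w2p1} settles. So the task reduces to matching characteristic classes across~$h$.

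First I would note that $w_1$ and $w_2$ of the two bundles agree: the total Stiefel-Whitney class of a closed manifold is a homotopy invariant — being determined by the Wu class via Wu's formula, hence by the mod-$2$ cohomology ring and the Steenrod squares — so $w_i(TM_1)=h^*w_i(TM_2)=w_i(h^*TM_2)$ for all~$i$. In the notation of the proof of \cref{prop:w2p1}, this means $TM_1-h^*TM_2\in\rKO(M_1)$ lies in the kernel of $(w_1,w_2)$, hence in $\rKSpin(M_1)$, so that $q_1(TM_1-h^*TM_2)\in H^4(M_1)$ is defined; by the first part of \cref{prop:w2p1} — equivalently, by the isomorphism $q_1\colon\rKSpin(M_1)\xrightarrow{\sim}H^4(M_1)$ established there — the desired stable isomorphism is equivalent to the vanishing of this class. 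When $M_1,M_2$ are Spin, $TM_1$ and $h^*TM_2$ are themselves Spin bundles, so $q_1(TM_1-h^*TM_2)=q_1(TM_1)-h^*q_1(TM_2)$, which vanishes by the homeomorphism invariance of $q_1$ recalled above.

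The one place I expect a real subtlety is the second assertion, where $H^4(M_1)$ is $2$-torsion-free but $M_1$ need not be Spin, so $q_1$ of the individual bundles $TM_1$ and $h^*TM_2$ may be undefined and the splitting above is unavailable. I would still work with the virtual bundle: $q_1(TM_1-h^*TM_2)$ remains defined, and its vanishing is exactly the homeomorphism invariance of $q_1$ in the form that needs no Spin hypothesis — legitimate precisely because $w_1,w_2$ are homotopy invariants. Concretely, since multiplication by $2$ is injective on $H^4(M_1)$, this vanishing is equivalent to $2q_1(TM_1-h^*TM_2)=p_1(TM_1)-h^*p_1(TM_2)$ being zero (using \cref{prop:q1}\,\eqref{eq:q1:p1} and additivity of $p_1$ modulo $2$-torsion), i.e.\ to the integral first Pontryagin class of the tangent bundle being a homeomorphism invariant modulo $2$-torsion in dimensions~$\leq 7$. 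This last point is classical rather than formal: by smoothing theory the two smooth structures on the common underlying topological manifold differ by a map to $\mathrm{TOP}/\mathrm{O}$, which is $2$-connected with $\pi_3=\ZZ_2$, so in degrees $\leq 7$ any discrepancy between $p_1(TM_1)$ and $h^*p_1(TM_2)$ in $H^4$ lies in the image of a Bockstein $H^3(-;\ZZ_2)\to H^4(-)$ and is therefore $2$-torsion, hence zero. Given this, \cref{prop:w2p1} closes both cases.
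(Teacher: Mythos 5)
Your overall reduction is the same as the paper's: by \cref{prop:w2p1} and the homotopy invariance of \(w_1,w_2\) (which the paper also invokes), everything comes down to showing that the relevant degree-four class of the tangent bundle does not change under the homeomorphism \(h\). Where you differ is that the paper simply quotes the homeomorphism invariance of \(q_1\) from Crowley--Nordström \cite[1.1\slash Rem.~2.1]{CN}, whereas you try to prove the needed invariance yourself. For the \emph{second} statement your argument does work and is a reasonable self-contained substitute: since \(TM_1\) and \(h^*TM_2\) have the same underlying stable topological tangent bundle, their difference in \(\rKO(M_1)\) comes from \([M_1,\mathrm{TOP}/\mathrm{O}]\), and since \(\mathrm{TOP}/\mathrm{O}\) agrees with \(K(\ZZ_2,3)\) through dimension six, \(H^4(\mathrm{TOP}/\mathrm{O};\ZZ)\cong\ZZ_2\), so the discrepancy in \(p_1\) is \(2\)-torsion and vanishes under the hypothesis on \(H^4\). (Minor point: you should justify the "differ by a map to \(\mathrm{TOP}/\mathrm{O}\)" step via the topological invariance of the stable tangent microbundle and exactness of \([M_1,\mathrm{TOP}/\mathrm{O}]\to[M_1,\mathrm{BO}]\to[M_1,\mathrm{BTOP}]\), rather than via smoothing theory proper, which is delicate in dimension four; the bundle-level statement holds in all dimensions.)

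The Spin statement, however, has a genuine gap. There \(H^4(M_1)\) may well contain \(2\)-torsion, so "the discrepancy is \(2\)-torsion" proves nothing, and your appeal to "the homeomorphism invariance of \(q_1\) recalled above" points to nothing: you never establish it, and your connectivity argument cannot, because it only shows that \(q_1(TM_1)-h^*q_1(TM_2)\) is pulled back, along the difference map \(M_1\to\mathrm{TOP}/\mathrm{O}\), from a universal class in \(H^4(\mathrm{TOP}/\mathrm{O};\ZZ)\cong\ZZ_2\) --- a class that could a priori be the nonzero element, in which case the discrepancy would be a nonzero \(2\)-torsion class and \(q_1\colon\rKSpin(M_1)\to H^4(M_1)\) would detect it. What is needed is precisely the finer fact that this universal class vanishes, equivalently that the spin characteristic class is defined for topological spin bundles (surjectivity of \(H^4(\mathrm{BTopSpin};\ZZ)\to H^4(\mathrm{BSpin};\ZZ)\)) and hence is a homeomorphism invariant; this does not follow from \(\pi_3(\mathrm{TOP}/\mathrm{O})=\ZZ_2\) and the vanishing of \(\pi_4,\dots,\pi_6\) alone. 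That is exactly the input the paper takes from \cite{CN}; to close your argument in the Spin case you must either cite such a statement or prove it, which requires strictly more than the homotopy-theoretic facts about \(\mathrm{TOP}/\mathrm{O}\) you use.
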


\medskip

We introduce the following notation for a CW complex \(X\) with \(H^4(X)\) finite:
\begin{equation}\label{eq:def:sigma4}
  \sigma_4(X) = \begin{cases*}
    1 & if $H^4(X)=0$ \\
    4 & if $\abs{H^4(X)}$ is odd \\
    9 & if $\abs{H^4(X)}$ is even and non-zero
  \end{cases*}
\end{equation}

\begin{proposition}\label{prop: sum of line bundles}
  Let \(X\) be a connected CW complex of dimension \(\leq 7\) such that \(H^1(X,\ZZ_2)=0\), \(H^2(X)\) is (non-zero) cyclic, \(H^3(X)\) contains no \(2\)-torsion, and \(H^4(X)\) is finite cyclic and generated by the square of a generator of \(H^2(X)\).%
  Then any real vector bundle over \(X\) is stably equivalent to (the underlying real bundle of) a Whitney sum of \(\sigma_4(X)\) complex line bundles.
\end{proposition}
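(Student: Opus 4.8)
The plan is to reduce the classification of real vector bundles over $X$ to an algebraic computation inside $\rKO(X)$, using the characteristic-class criteria of \cref{prop:w2p1}, and then exhibit a suitable sum of complex line bundles realizing each stable class. I will split into the three cases according to the value of $\sigma_4(X)$, which corresponds to the three cases in the definition \eqref{eq:def:sigma4}.

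**Key steps:**

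First I would fix a generator $x \in H^2(X)$ and let $L$ denote a complex line bundle with $c_1(L) = x$; such an $L$ exists and is unique up to isomorphism since $H^2(X) \cong \ZZ/n$ classifies line bundles and $H^1(X,\ZZ_2) = 0$. I would then compute the Chern classes of Whitney sums $L^{a_1} \oplus \cdots \oplus L^{a_k}$ (tensor powers), obtaining $c_1 = (\sum a_i)x$ and $c_2 = (\sum_{i<j} a_i a_j)x^2$, and hence via \cref{prop:q1}\,\eqref{eq:q1:q1r} or \eqref{eq:q1:2q1r} the relevant Spin/Pontryagin class. Next, given an arbitrary real bundle $F$ over $X$, note $w_1 F = 0$ automatically (since $H^1(X,\ZZ_2) = 0$) and $w_2 F \in H^2(X,\ZZ_2) = \ZZ/2$ (or $0$); in either case the class $w_2 F$ is the mod-$2$ reduction of some multiple of $x$, so by twisting $F$ by the real bundle underlying an appropriate power of $L$ I can reduce to the case where $F$ is a Spin bundle without changing the problem. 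Then by \cref{prop:w2p1} (first part), $F$ is determined up to stable equivalence by $q_1 F \in H^4(X) = \ZZ/m \cdot x^2$. So it remains to show that every value of $q_1$ is hit by a real sum of line bundles of the prescribed length $\sigma_4(X)$, \emph{and} that the $w_2$-correction step can be absorbed into this same count.

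**Realizing the classes:** In the case $H^4(X) = 0$ ($\sigma_4 = 1$): a Spin bundle is stably trivial, and a single line bundle $L^a$ handles the $w_2$-correction — so one complex line bundle suffices. In the odd case ($\sigma_4 = 4$): here $2$ is invertible mod $m = |H^4(X)|$, so I can solve for coefficients making $c_2$ (equivalently $q_1$) take any prescribed value. The point is that with enough line summands the quadratic form $\sum_{i<j} a_i a_j$ hits all of $\ZZ/m$ while keeping $c_1 \equiv 0$ (so that \eqref{eq:q1:q1r} applies) or with $c_1$ equal to the needed $w_2$-correction; a short computation with, say, summands $L, L^{-1}, L^b, L^{-b}$ adjusts $c_2$ by $-b^2$ at a time, and four summands turn out to be exactly what is needed to both fix parity and reach every residue. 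In the even case ($\sigma_4 = 9$): the obstruction is that mod $2$ one cannot freely solve the quadratic equation, and additionally \eqref{eq:q1:2q1r} shows $p_1$ of a real sum of line bundles is a sum of squares $\sum a_i^2 x^2$ plus cross terms — controlling $q_1 = p_1/2$ when $H^4$ has $2$-torsion-free but even-order complications requires more summands to realize the "odd" residues; nine is the count that makes the relevant Diophantine system solvable over $\ZZ/m$ for all even $m$.

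**Main obstacle:** The genuinely delicate part is the even case — verifying that nine complex line bundles always suffice. One must show that the map sending $(a_1,\dots,a_9) \in \ZZ^9$ with $\sum a_i$ in a prescribed residue class to the pair $\big(\sum a_i \bmod 2n,\ q_1(L^{a_1}\oplus\cdots\oplus L^{a_9})\big) \in H^2(X,\ZZ_2)\times H^4(X)$ is surjective onto the relevant coset, for every even order $m = |H^4(X)|$. This amounts to a statement about representing elements of $\ZZ/m$ by the quadratic form $\sum_{i<j}a_ia_j$ (equivalently $\tfrac12((\sum a_i)^2 - \sum a_i^2)$) with a linear constraint modulo $2$; the number $9 = \sigma_4$ is presumably chosen because it makes this form universal in the required sense (nine being a classical threshold for sums-of-squares-type representability). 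I would isolate this as a purely number-theoretic lemma and check it directly, paying attention to the interaction between the mod-$2$ constraint coming from $w_2$ and the value of $q_1$ mod $2$.
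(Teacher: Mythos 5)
Your overall skeleton matches the paper's: reduce to characteristic classes via \cref{prop:w2p1}, split according to \(\sigma_4(X)\), and realize the classes by sums of powers of a line bundle generating \(\mathrm{Pic}(X)\). But the decisive arithmetic step is missing, and the mechanism you sketch for it does not work as stated. In the odd case, if you insist on \(c_1\equiv 0\) so that \cref{prop:q1}\,\eqref{eq:q1:q1r} applies, then four summands force the paired shape \(L^{a}\oplus L^{-a}\oplus L^{b}\oplus L^{-b}\), whose invariant is \(a^2+b^2\) --- a sum of only \emph{two} squares, which does not hit every residue (e.g.\ \(3\) is not a sum of two squares mod \(9\)); and your gadget \(L\oplus L^{-1}\oplus L^{b}\oplus L^{-b}\) gives \(1+b^2\), which is even more restrictive. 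The correct device, used in the paper, is to drop the constraint \(c_1=0\) in the odd case and work with \(w_2\) and \(p_1\) via the second half of \cref{prop:w2p1} (legitimate here since \(\abs{H^4}\) odd means no \(2\)-torsion): by \cref{prop:q1}\,\eqref{eq:q1:2q1r} one has \(p_1(rL_{a_1,\dots,a_4})=a_1^2+a_2^2+a_3^2+a_4^2\) (no cross terms --- your ``sum of squares plus cross terms'' is a miscomputation), so Lagrange's Four Square Theorem realizes every value of \(p_1\), and \(w_2\) is then corrected for free by replacing \(a_1\) with \(a_1+s\), which flips parity without changing \(a_1^2\) mod \(s\) because \(s\) is odd. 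You never invoke Lagrange, and without it ``four summands turn out to be exactly what is needed'' is an unsupported assertion.

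In the even case you explicitly leave the key point open (``nine is presumably chosen because\ldots I would isolate this as a purely number-theoretic lemma and check it directly''), and the heuristic you give is off target: nine is not a sums-of-squares threshold but simply \(2\cdot 4+1\). Here the second half of \cref{prop:w2p1} is unavailable (even cyclic \(H^4\) has \(2\)-torsion), so one cannot write \(q_1=p_1/2\) as you do; instead one forces \(c_1=0\) \emph{integrally} by pairing exponents, so that \cref{prop:q1}\,\eqref{eq:q1:q1r} gives \(q_1(rL_{a_1,-a_1,\dots,a_4,-a_4})=a_1^2+\dots+a_4^2\), Lagrange again realizes every value of \(q_1\) on Spin classes with eight line bundles, and a single extra bundle \(L_b\) absorbs the \(w_2\)-correction (working with the class \(F-rL_b\in\rKSpin(X)\), not by ``twisting'' \(F\) itself), giving nine. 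Without these two ingredients --- the exact sum-of-four-squares computation of \(p_1\), respectively \(q_1\), and Lagrange's theorem --- the proposal does not establish the counts \(4\) and \(9\), so as written it has a genuine gap precisely at the heart of the proposition.
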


\begin{proof}
  Under our assumptions, the Bockstein sequence shows that the reduction map \(H^2(X)\to H^2(X,\ZZ_2)\) is surjective, and that either \(H^2(X,\ZZ_2)=0\) or \(H^2(X,\ZZ_2)\cong \ZZ_2\).  We identify \(H^4(X)\) with \(\ZZ_s\) for some positive integer~\(s\).  We will not distinguish between integers and their images in any of these residue groups notationally.  Given an integer \(a\), we write \(L_a\) for the complex line bundle with \(c_1(L_a) = a \in H^2(X)\).  More generally, a sum of such line bundles will be denoted \(L_{a_1,\dots, a_k} := L_{a_1} \oplus \cdots \oplus L_{a_k}\).

  If \(s=0\), \ie if \(H^4(X)\) vanishes, then by the second half of \cref{prop:w2p1} the stable equivalence class of a real vector bundle \(F\) over \(X\) is determined by \(w_2(F)\).  Thus \(F\) is stably equivalent to either \(r(L_0)\) or \(r(L_1)\).

  Next, consider the case that the order \(s\) of \(H^4(X)\) is odd.  Let \(F\) be an arbitrary given real vector bundle over \(X\). By the second part of \cref{prop:w2p1}, it suffices to find integers \(a_1, \dots, a_4\) such that
  \begin{align*}
     w_2(rL_{a_1,\dots,a_4}) &= w_2(F) \tag{i} \label{eq:sum-of-lbs:w2}\\
     p_1(rL_{a_1,\dots,a_4}) &= p_1(F)  \tag{ii} \label{eq:sum-of-lbs:p1}
  \end{align*}
  If \(H^2(X,\ZZ_2)=0\), we can ignore the first condition; otherwise, \(w_2(rL_{a_1,\dots,a_4}) = a_1+a_2+a_3+a_4 \mod 2\).  For the Pontryagin class, part~\eqref{eq:q1:2q1r} of \cref{prop:q1} implies that
    \begin{align*}
    p_1(rL_{a_1,\dots,a_4}) &= a_1^2 + a_2^2 + a_3^2 + a_4^2 \quad \in H^4(X).
  \end{align*}
  So we can find integers \(a_i\) satisfying condition~\eqref{eq:sum-of-lbs:p1} by appealing to Lagrange's Four Square Theorem:  any positive integer can be written as a sum of a most four squares.  In case these integers do not already satisfy condition \eqref{eq:sum-of-lbs:w2}, we can replace \(a_1\) by \(a_1+s\):  as \(a_1+s = a_1+1 \mod 2\) and \((a_1+s)^2 = a_1^2 \mod s\), the new set of integers will then satisfy both conditions.

  Finally, for arbitrary \(s\), we can argue as follows.  Let \(F\) again be some given real vector bundle over \(X\), but assume to begin with that \(F\) is a Spin bundle.  Then in view of \cref{prop:w2p1} it suffices to show that there exists a Whitney sum of (at most nine) complex line bundles \(L_{a_1,\dots, a_k}\) such that \(rL_{a_1,\dots, a_k}\) is a Spin bundle with the same Spin characteristic class as~\(F\).  As the first Chern class of such a sum is given by
  \[
    c_1(L_{a_1,\dots,a_k}) = a_1 + \cdots + a_k,
  \]
  \(rL_{a_1,\dots,a_k}\) is certainly a Spin bundle whenever \(a_1 + \cdots + a_k \equiv 0 \mod 2\).  Moreover, part~\eqref{eq:q1:q1r} of \cref{prop:q1} applies whenever \(a_1 + \cdots + a_k =0\) in \(\ZZ\). In particular, we find that \(q_1(rL_{a,-a})= a^2\), and more generally that
  \[
    q_1(rL_{a_1,-a_1,a_2,-a_2,a_3,-a_3,a_4,-a_4}) = a_1^2 + a_2^2 + a_3^2 + a_4^2 \quad \in H^4(X).
  \]
  So, again by Lagrange's Four Square Theorem, we can find integers \(a_1\), \(a_2\), \(a_3\), \(a_4\) such that  \(q_1(rL_{a_1,-a_1,\dots,a_4,-a_4}) = q_1 F\), whatever the given value of \(q_1 F\).  So our Spin bundle \(F\) is stably equivalent to a Whitney sum of eight complex line bundles.

  When \(F\) is an arbitrary real vector bundle, we can pick a complex line bundle \(L_b\) such that \(w_2(rL_b) = w_2(F)\).  Then \(F-rL_b\) is a stable equivalence class in \(\rKSpin(X)\), the previous argument shows that \(F-rL_b = rL_{a_1,-a_1,\dots,a_4,-a_4}\) in \(\rKSpin\), and hence \(F\) is stably equivalent to the Whitney sum of nine complex line bundles \(rL_{a_1,-a_1,\dots,a_4,-a_4,b}\).
\end{proof}

\begin{corollary}\label{cor: sum of line bundles}
  Let \(X\) be a connected CW complex satisfying the assumptions of \cref{prop: sum of line bundles}.
  Any real vector bundle over \(X\) of rank \(\geq \max\{2\sigma_4(X),\dim(X) + 1\}\) is isomorphic to a Whitney sum of \(\sigma_4(X)\) complex line bundles and a trivial bundle.
\end{corollary}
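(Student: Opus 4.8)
The plan is to upgrade the stable statement of \cref{prop: sum of line bundles} to an unstable one by a standard dimension-counting argument. First I would recall that \cref{prop: sum of line bundles} produces, for any given real vector bundle \(F\) over \(X\), a Whitney sum of \(\sigma_4(X)\) complex line bundles \(L := L_{a_1,\dots,a_{\sigma_4(X)}}\) together with integers \(k, k'\geq 0\) such that
\[
  F\oplus\RR^{k} \;\cong\; rL\oplus\RR^{k'}.
\]
The real rank of \(rL\) is \(2\sigma_4(X)\), and after adding trivial summands to both sides we may assume \(k=k'\), i.e.\ \(F\oplus\RR^k\cong rL\oplus\RR^k\) for some common \(k\geq 0\). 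Now suppose \(\rank F =: n \geq \max\{2\sigma_4(X),\dim X+1\}\). We want to cancel the trivial summands and conclude \(F\cong rL\oplus\RR^{\,n-2\sigma_4(X)}\).

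The key step is the cancellation. I would invoke the standard fact (a consequence of obstruction theory, or equivalently of the fact that \(\GL_n(\RR)\to\GL_{n+1}(\RR)\) induces an isomorphism on \(\pi_i\) for \(i\leq n-1\)) that over a CW complex of dimension \(\leq n\), two vector bundles of rank \(n\) that become isomorphic after adding a trivial line bundle are already isomorphic; more generally, a stable isomorphism between bundles of rank \(\geq \dim X+1\) is realized by an actual isomorphism after equalizing ranks with trivial summands. Concretely: set \(m := 2\sigma_4(X)\) and write \(G := rL\oplus\RR^{\,n-m}\), a bundle of rank \(n\). Then \(F\oplus\RR^k\cong G\oplus\RR^k\), both \(F\) and \(G\) have rank \(n\geq\dim X+1\), so applying the cancellation fact \(k\) times gives \(F\cong G = rL\oplus\RR^{\,n-m}\), which is exactly the assertion. (The hypothesis \(n\geq 2\sigma_4(X)\) is what guarantees \(n-m\geq 0\), so that \(G\) makes sense; the hypothesis \(n\geq\dim X+1\) is what powers the cancellation.)

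The main obstacle — really the only nontrivial point — is citing the cancellation result in the correct form and making sure the rank bounds line up. One has to be a little careful that the classical statement is "rank \(>\dim X\)" (strict), which is why the bound in the corollary reads \(\dim X+1\) rather than \(\dim X\); and one should note that \cref{prop: sum of line bundles} is applied to \(F\) itself, so the line bundles \(L_{a_i}\) depend on \(F\). Everything else is bookkeeping: equalizing the number of trivial summands on the two sides of the stable isomorphism, and checking that \(n-2\sigma_4(X)\geq 0\) so that the final trivial summand has non-negative rank. No further input is needed beyond \cref{prop: sum of line bundles} and the cancellation theorem for vector bundles.
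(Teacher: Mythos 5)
Your proposal is correct and follows essentially the same route as the paper: apply \cref{prop: sum of line bundles} to get a stable equivalence with a sum of \(\sigma_4(X)\) line bundles, pad with a trivial summand to equalize ranks (possible since the rank is \(\geq 2\sigma_4(X)\)), and then invoke the standard fact that stably equivalent bundles of common rank exceeding \(\dim X\) are isomorphic (the paper cites \cite[Ch.~9, Prop.~1.1]{Hu} for exactly this cancellation). Your bookkeeping about the rank bounds matches the paper's hypotheses precisely.
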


\begin{proof}
This is immediate from \cref{prop: sum of line bundles} and the general fact that the notions of stable equivalence and isomorphism agree for bundles of sufficiently high rank:
  if two real vector bundles of the same rank \(F\) and \(F'\) over an \(n\)-dimensional CW complex are stably equivalent, and if the common rank of these bundles is greater than \(n\),  then \(F\) and \(F'\) are isomorphic (e.g. \cite[Ch.~9, Prop.~1.1]{Hu}).
\end{proof}

\begin{remark}\label{Rem: generalization higher dim}
  We have deliberately refrained from stating \cref{prop:w2p1,prop: sum of line bundles,cor: sum of line bundles} with minimal assumptions.
  In \cref{prop:w2p1}, the condition that \(X\) is a connected CW complex of dimension \(\leq 7\) could easily be replaced with the following weaker assumptions:

  \begin{itemize}[--]
  \item \(X\) is a connected finite-dimensional CW complex.
  \item The inclusion of the seven-skeleton \(X^7\) induces an isomorphism \(\rKO(X^7)\cong \rKO(X)\).  The Atiyah-Hirzebruch spectral sequence shows that a sufficient criterion for this to be the case is that all non-vanishing integral cohomology groups \(H^i(X)\) in degrees \(i\geq 5\) are torsion-free and concentrated in degrees \(i\) with \((i\mod 8)\in\{3,5,6,7\}\).
\end{itemize}
  The additional assumptions needed in \cref{prop: sum of line bundles,cor: sum of line bundles} are that \(H^1(X,\ZZ_2)\), \(H^2(X)\), \(H^3(X)\) and \(H^4(X)\) have the properties stated in \cref{prop: sum of line bundles}.
\end{remark}

\section{Non-negative curvature}\label{sec:curvature}
In this section we review a common construction of non-negatively curved metrics on vector bundles and prove \cref{THM: bundles over Esch,THM: bundles over other spaces}, which give partial positive answers to the converse soul question for Eschenburg spaces and a few other spaces.

\medskip

Let $G$ be a Lie group and let $P\to M$ be a principal $G$-bundle. Given a representation $\rho\colon G\to \RR^m$, there exists a natural diagonal action on the product $P\times\RR^m$ whose quotient space $E_\rho = P\times_G \RR^m$ is the total space of a real vector bundle over $M$.
This constructions yields a natural semiring homomorphism:
\[
\Rep(G)\to \Vect(M)
\]
Suppose now that $P$ admits a $G$-invariant metric $g_P$ with non-negative sectional curvature. By the Gray-O'Neill formula for Riemannian submersions, $M$ inherits a metric \(\bar g_P\) with non-negative sectional curvature. Now suppose that \(\rho\colon G\to \RR^m\) is an orthogonal representation with respect to the usual Euclidian metric $g_0$ on $\RR^m$.  Equip \(P\times \RR^m\) with the product metric $g_P\times g_0$.  Then $P\times\RR^m$ also has non-negative sectional curvature and the diagonal $G$-action on $P\times\RR^m$ is by isometries. So, again by the Gray-O'Neill formula, $E_\rho$ inherits a metric with non-negative sectional curvature for which the zero-section $(P\times_G\{0\},\bar g_p)=(M,\bar g_P)$ is a soul.

At the present time, this is the only known construction of open manifolds with non-negative sectional curvature, up to a change of metric (see \cite[Section 3.1]{Wi}). It is natural to ask which vector bundles over $M$ can be constructed in this way, a purely topological question that is discussed at length in \cite{GoZ} for the case when \(P\to M\) is the canonical \(G\)-bundle over a homogeneous space \(G'/G\).  Here, we consider circle bundles, i.\,e.\ the case $G=S^1$.

\begin{proposition}\label{PROP:metrics type r}
Let \(P\to M\) be a principal circle bundle over a closed manifold~\(M\).
Assume that \(P\) is \(2\)-connected and that it admits an invariant metric \(g_P\) of non-negative sectional curvature. Then the total space of any Whitney sum of complex line bundles over \(M\) admits a metric of non-negative sectional curvature and with soul isometric to \((M,\bar g_P)\), where \(\bar g_P\) denotes the quotient metric inherited from \(g_P\).
\end{proposition}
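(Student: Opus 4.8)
The plan is to reduce the statement to the construction reviewed just before it, applied to the principal circle bundle $P\to M$. Given a Whitney sum of complex line bundles $L_{a_1}\oplus\cdots\oplus L_{a_k}$ over $M$, the first step is to realize this bundle as an associated bundle $P\times_{S^1}\RR^{2k}$ for a suitable orthogonal representation $\rho\colon S^1\to\OO(2k)$. Since $P$ is $2$-connected, the associated complex line bundle $P\times_{S^1}\CC$ has some first Chern class $c\in H^2(M)$; because $P$ is $2$-connected and $M$ is the base of a circle bundle with this Euler class, $c$ generates a cyclic group and, more importantly, every line bundle over $M$ with first Chern class a multiple of $c$ is a tensor power of $P\times_{S^1}\CC$. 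The key point I would check is that in fact $c$ \emph{generates} $H^2(M)$: this follows from the Gysin sequence of the circle bundle $S^1\to P\to M$ together with $H^2(P)=0$, which forces multiplication by $c$, i.e.\ $H^0(M)\to H^2(M)$, to be surjective. Hence every complex line bundle over $M$ is isomorphic to $P\times_{S^1}\CC_{(n)}$ for some integer $n$, where $\CC_{(n)}$ denotes $\CC$ with the weight-$n$ circle action $z\cdot v = z^n v$.

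The second step is then purely formal: given line bundles $L_{a_i}=P\times_{S^1}\CC_{(n_i)}$, their Whitney sum is $P\times_{S^1}(\CC_{(n_1)}\oplus\cdots\oplus\CC_{(n_k)})$, which is the bundle associated to the orthogonal representation $\rho=\CC_{(n_1)}\oplus\cdots\oplus\CC_{(n_k)}\colon S^1\to\U(k)\subset\OO(2k)$. Indeed the diagonal circle action commutes with the associated-bundle construction, so forming $E_\rho$ for the direct-sum representation yields precisely the Whitney sum of the $E_{\rho_i}$. Now the construction reviewed before the proposition applies verbatim: $P$ carries the invariant non-negatively curved metric $g_P$, the representation $\rho$ is orthogonal, so equipping $P\times\RR^{2k}$ with the product metric $g_P\times g_0$ and passing to the quotient $E_\rho$ under the diagonal isometric $S^1$-action produces, by the Gray--O'Neill formula, a metric of non-negative sectional curvature on $E_\rho$ for which the zero section $(P\times_{S^1}\{0\},\bar g_P)=(M,\bar g_P)$ is a soul. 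This gives exactly the conclusion.

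I expect the only real content — as opposed to bookkeeping — to be the surjectivity argument identifying every complex line bundle over $M$ as associated to $P$. Everything downstream of that is a restatement of the general construction. One should be slightly careful about where $2$-connectedness of $P$ is genuinely used versus merely convenient: $1$-connectedness of $P$ gives $H^1(M,\ZZ)=0$ and hence $H^2(M,\ZZ)$ torsion-free enough that Chern classes classify line bundles, while vanishing of $\pi_2(P)$ (equivalently $H^2(P)=0$, using the Hurewicz theorem since $P$ is simply connected) is what feeds the Gysin sequence to force $c$ to generate $H^2(M)$. It may be worth remarking that one does not need the full strength of $H^2(M)$ being cyclic: it suffices that every line bundle one wishes to sum is a power of $P\times_{S^1}\CC$, which $2$-connectedness of $P$ guarantees. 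I would state the Gysin-sequence step explicitly and leave the rest as an application of the preceding paragraph's construction.
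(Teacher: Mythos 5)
Your proposal is correct and follows essentially the same route as the paper: identify every complex line bundle (and hence every Whitney sum of line bundles) over $M$ as a bundle associated to $P$ via a character of $S^1$, then apply the Gray--O'Neill construction reviewed just before the proposition, with $(M,\bar g_P)$ as the soul. The only difference is cosmetic: where you derive from the Gysin sequence that the Euler class of $P\to M$ generates $H^2(M)$, the paper simply cites Section~12 of \cite{BKS15} for that fact.
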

\begin{proof}
As explained in \cite[Section 12]{BKS15}, the fact that \(P\) is \(2\)-connected implies that \(H^2(M)=\ZZ\) and that the first Chern class of the bundle is a generator of \(H^2(M)\). It follows that any complex line bundle over \(M\) has the form \(E_\rho = P\times_{S^1}\CC\) for some character \(\rho\) of \(S^1\), and more generally that any Whitney sum of complex line bundles has the form  \(E_\rho = P\times_{S^1}\CC^k\) for some direct sum of characters \(\rho\in \Rep(S^1)\).  So the claim follows immediately from the discussion above.
\end{proof}
Conditions for a circle bundle to admit invariant metrics with non-negative sectional curvature are given in \cite{STT}.

\Cref{THM: bundles over Esch,THM: bundles over other spaces} of the introduction are particular cases of the following more general statement.
Recall from eq.\,\eqref{eq:def:sigma4} in \cref{sec:KO} our notation \(\sigma_4(M)\) for a space with \(H^4(M)\) finite.

\begin{proposition}\label{THM: nnc on vector bundles over type s}
Let \(P\to M\) be a principal circle bundle over a closed manifold~\(M\). Assume that
\begin{compactitem}
\item \(P\) is \(2\)-connected (so that \(H^1(M)=0\) and \(H^2(M)=\ZZ\)) and that it admits an invariant metric \(g_P\) of non-negative sectional curvature, and that
\item \(H^3(M)\) contains no \(2\)-torsion, \(H^4(M)\) is finite cyclic and generated by the square of a generator of \(H^2(M)\), and  all non-vanishing integral cohomology groups \(H^i(M)\) in degrees \(i\geq 5\) are torsion-free and concentrated in degrees \(i\) with \((i\mod 8)\in\{3,5,6,7\}\).
\end{compactitem}
Then the total space of every real vector bundle of rank \(\geq {\max\{2\sigma_4(M),\dim(M)+1\}}\)  over \(M\)  admits a metric with non-negative sectional curvature and soul isometric to \((M,\bar{g}_P)\), where \(\bar g_P\) is the induced quotient metric on \(M\).
\end{proposition}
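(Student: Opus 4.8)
The plan is to assemble the statement from \cref{cor: sum of line bundles} and \cref{PROP:metrics type r}, working with the mild generalizations recorded in \cref{Rem: generalization higher dim} because the base \(M\) is not assumed to be seven-dimensional. First I would check that \(M\) satisfies the hypotheses under which the generalized form of \cref{cor: sum of line bundles} applies. It is a finite-dimensional CW complex, being closed. The inclusion of the seven-skeleton induces an isomorphism \(\rKO(M^7)\cong\rKO(M)\): by the Atiyah--Hirzebruch criterion noted in \cref{Rem: generalization higher dim} this is guaranteed by the hypothesis that every non-vanishing group \(H^i(M)\) with \(i\geq 5\) is torsion-free and concentrated in degrees with \((i\mod 8)\in\{3,5,6,7\}\). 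Moreover \(H^1(M,\ZZ_2)=0\), since \(H^1(M)=0\) and \(H^2(M)=\ZZ\) is torsion-free (universal coefficients); \(H^2(M)=\ZZ\) is non-zero cyclic; \(H^3(M)\) has no \(2\)-torsion; and \(H^4(M)\) is finite cyclic generated by the square of a generator of \(H^2(M)\) --- the last three directly by hypothesis.

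Granting this, the generalized \cref{cor: sum of line bundles} tells us that every real vector bundle \(F\) over \(M\) of rank \(\geq\max\{2\sigma_4(M),\dim(M)+1\}\) is isomorphic to a Whitney sum of \(\sigma_4(M)\) complex line bundles and a trivial bundle \(\RR^{j}\). It then remains to equip the total space of such a bundle with a metric of non-negative sectional curvature whose soul is \((M,\bar g_P)\). By hypothesis \(P\) is \(2\)-connected and carries an invariant metric \(g_P\) of non-negative sectional curvature, so, as in the proof of \cref{PROP:metrics type r}, \(H^2(M)=\ZZ\) is generated by \(c_1(P)\) and every complex line bundle over \(M\) is associated to \(P\) through a character of \(S^1\). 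Hence the bundle \(F\) above has the form \(P\times_{S^1}\RR^m\) for the orthogonal representation given by the direct sum of the corresponding rotation characters and \(j\) trivial summands, and the construction reviewed at the beginning of \cref{sec:curvature} then endows its total space with a metric of non-negative sectional curvature whose zero-section \((M,\bar g_P)\) is a soul. This is exactly the assertion. (The trivial summand \(\RR^{j}\) strictly falls outside the literal wording of \cref{PROP:metrics type r}, which speaks only of Whitney sums of complex line bundles, but it is covered verbatim by the same argument on allowing trivial characters.)

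I do not expect any genuine obstacle: all the mathematical content already resides in \cref{sec:KO,sec:curvature}. The one point deserving attention is that \(M\) may have dimension larger than seven, which forces one to use the generalized statements of \cref{Rem: generalization higher dim} rather than the dimension-\(\leq 7\) versions of \cref{prop:w2p1,prop: sum of line bundles,cor: sum of line bundles}, and to notice that the cohomological hypotheses on \(M\) in degrees \(\geq 5\) are precisely those the Atiyah--Hirzebruch spectral sequence argument needs. The remaining bookkeeping --- matching the rank bound and incorporating the trivial summand --- is routine.
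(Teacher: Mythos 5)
Your proposal is correct and follows essentially the same route as the paper: verify the hypotheses of \cref{cor: sum of line bundles} in the generalized form of \cref{Rem: generalization higher dim}, decompose the bundle as line bundles plus a trivial summand, and apply the associated-bundle construction underlying \cref{PROP:metrics type r}. The only cosmetic difference is that you absorb the trivial summand into the \(S^1\)-representation via trivial characters, whereas the paper takes the product of the metric from \cref{PROP:metrics type r} with the flat metric on the trivial factor --- these are the same construction.
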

\begin{proof}
  \Cref{cor: sum of line bundles,Rem: generalization higher dim} show that any real vector bundle \(F\) over \(M\) of rank \(\geq \max\{2\sigma_4(M),\dim(M)+1\}\) is isomorphic to a Whitney sum of complex line bundles and a trivial vector bundle.  The Whitney sum of complex line bundles admits a metric of non-negative sectional curvature by \cref{PROP:metrics type r}, and thus the product metric of this metric with the flat metric on the trivial summand yields a metric on \(F\) with the desired properties.
\end{proof}

To prove \cref{THM: bundles over Esch,THM: bundles over other spaces}, it now suffices to check that the spaces in question satisfy the assumptions of \cref{THM: nnc on vector bundles over type s}.
\begin{proof}[Proof of \cref{THM: bundles over Esch,THM: bundles over other spaces}]
The cohomology of Eschenburg and generalized Witten spaces is well known \cite{E05,Esch82}: they are manifolds of type \(r\) (see \cref{DEF: type r} below). For Eschenburg spaces \(|H^4(M)|\) is odd, while for generalized Witten spaces it can be either odd or even so both \(\sigma_4(M)=4\) and \(\sigma_4(M)=9\) occur. The total spaces of the corresponding principal bundles are \(SU(3)\) and \(\mathbb{S}^3\times\mathbb{S}^5\), respectively, which clearly satisfy the topological assumptions of \cref{THM: nnc on vector bundles over type s}. The corresponding metrics on \(SU(3)\) were constructed by Eschenburg \cite{Esch82}, see \cref{sec: Esch} below. As for the generalized Witten spaces, the circle actions are by isometries with respect to the standard product metric on \(\mathbb{S}^3\times\mathbb{S}^5\) (see \cite{E05}).

The products \(\mathbb{S}^2\times\mathbb{S}^m\) and the unique non-trivial \(\mathbb{S}^m\)-bundle over \(\mathbb{S}^2\) with \(m\geq 2\)
have the same cohomology ring,
which clearly satisfies the topological assumptions when \(m\equiv 3,5\mod 8\). The products \(\mathbb{S}^2\times \mathbb{S}^m\) are just quotients of \(\mathbb{S}^3\times \mathbb{S}^{m}\) via the Hopf fibration over the first factor. The unique non-trivial linear \(\mathbb{S}^m\)-bundle over \(\mathbb{S}^2\) with \(m=3\) or \(m\equiv 5\mod 8\) can be described as a circle quotient of \(\mathbb{S}^3\times \mathbb{S}^{m}\) as well. Moreover, the corresponding action is by isometries with respect to the standard product metric on \(\mathbb{S}^3\times \mathbb{S}^{m}\):  see \cite{DeV} for the case \(m=3\) and \cite[item (b) above Corollary~4]{WZ} for the cases \(m\equiv 5\mod 8\).%
\end{proof}


\section{Eschenburg spaces}\label{sec: Esch}
\renewcommand{\r}{r} 
Eschenburg spaces, first introduced and studied in \cite{Esch82}, generalize the homogeneous \(7\)-manifolds known as Aloff-Wallach spaces.  Each Eschenburg space is a quotient of \(SU(3)\) by a free action of \(S^1\) of the following form:
\begin{align*}
  S^1 \times SU(3) & \longrightarrow SU(3)\\
(z,A) &\mapsto \diag(z^{k_1},z^{k_2},z^{k_3}) \cdot A \cdot \diag(z^{-l_1},z^{-l_2},z^{-l_3})
\end{align*}
Following \cite{CEZ07}, we specify the action of \(S^1\) and the resulting Eschenburg space \(M = M(k,l)\) by the six-tuple of integer parameters \((k,l) = (k_1,k_2,k_3,l_1,l_2,l_3)\).  We refer to this six-tuple as the parameter vector of \(M\). The parameters need to satisfy \(k_1+k_2+k_3 = l_1+l_2+l_3\), as well as some further conditions that ensure that the \(S^1\)-action is free, see \cite[(1.1)]{CEZ07}.  The Aloff-Wallach spaces are the Eschenburg spaces \(M(k,l)\) with \(l_1 = l_2 = l_3 =0\).%

All Aloff-Wallach spaces \(M(k,0)\) with \(k_1 k_2 k_3\neq 0\) admit an invariant metric of positive sectional curvature.
The interest in more general Eschenburg spaces arises from the fact that they include some of the very few known examples of \emph{non}-homogeneous manifolds with positive sectional curvature.
Any metric on \(\SU(3)\) invariant under the circle action defined by \((k,l)\) descends to a metric on the Eschenburg space \(M(k,l)\). We refer to a metric on an Eschenburg space arising in this way as a \emph{submersion metric}.  Every Eschenburg space comes equipped with non-negatively curved submersion metrics.  For example, one could consider metrics induced by  bi-invariant metrics on \(\SU(3)\), but there are also lots of other choices.
Eschenburg constructed submersion metrics with \emph{positive} sectional curvature on infinitely many Eschenburg spaces \cite[Satz~414]{Esch84}. In particular, he did so for all Eschenburg spaces \(M(k,l)\) whose parameter vector satisfies the following condition:
\begin{equation}
\tag{\(\dagger\)}\label{eq:Esch:standard-parameters}
k_1 \geq k_2 > l_1 \geq l_2 \geq l_3 = 0.
\end{equation}
In fact, as explained in \cite[Lemma~1.4]{CEZ07}, each of the Eschenburg spaces on which Eschenburg constructed a positively curved submersion metric is diffeomorphic to one of the spaces \(M(k,l)\) satisfying \eqref{eq:Esch:standard-parameters}.

Positively curved Eschenburg spaces display interesting phenomena that are not visible when studying the Aloff-Wallach subfamily alone.  The following proposition is one example of this.  Part~(b) was already stated as \cref{THM: existence of tangential nonhomeo pairs} of the introduction.

\begin{proposition}\label{prop:Eschenburg-phenomena}
For Aloff-Wallach spaces, the notions of homotopy equivalence, tangential homotopy equivalence and homeomorphism coincide.
In contrast, for general positively curved Eschenburg spaces, these notions differ:
 \begin{itemize}
 \item[(a)] There exist pairs of positively curved Eschenburg spaces which are homotopy equivalent to each other but not tangentially homotopy equivalent.
 \item[(b)] There exist pairs of positively curved Eschenburg spaces which are tangentially homotopy equivalent but not homeomorphic.
 \end{itemize}
Examples of both phenomena are displayed in \cref{table:Esch:phenomena}.
\end{proposition}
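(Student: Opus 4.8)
The plan is to reduce each of the three equivalence relations to the algebraic invariants classifying Eschenburg spaces, and then to locate explicit parameter vectors realizing the asserted (non-)coincidences. Note first that for any closed spin (or, more generally, ``\(H^4\) has no \(2\)-torsion'') \(7\)-manifolds, and in particular for Eschenburg spaces, a homeomorphism is a tangential homotopy equivalence by \cref{rem: homeo type s mfds are tangentially homotopic}, and a tangential homotopy equivalence is by definition a homotopy equivalence. So the content of the proposition is that, for Aloff-Wallach spaces, a homotopy equivalence already forces a homeomorphism, whereas for general positively curved Eschenburg spaces each of the two implications can fail.

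The first step is to make the \emph{tangential} homotopy classification explicit. By \cref{prop:w2p1}, a homotopy equivalence \(f\colon M_1\to M_2\) between Eschenburg spaces is tangential if and only if \(f^*w_2(M_2)=w_2(M_1)\) and \(f^*p_1(M_2)=p_1(M_1)\). The first identity is automatic, since \(w_2\) is a homotopy invariant of closed manifolds (the second Wu class). For the second, recall that every Eschenburg space is a manifold ``of type \(r\)'': \(H^2(M_i)=\ZZ\) and \(H^4(M_i)\) is cyclic of order \(|r|\), generated by the square of a generator of \(H^2(M_i)\) (see \cite{Esch82}). Since \(f\) induces an isomorphism \(H^2(M_2)\xrightarrow{\sim}H^2(M_1)\), it sends a generator to \(\pm(\)a generator\()\), hence the induced map \(f^*\colon H^4(M_2)\to H^4(M_1)\) carries the canonical generator to the canonical generator; that is, it is the identity under the canonical identifications \(H^4(M_i)\cong\ZZ_{|r|}\), independently of the chosen \(f\). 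Writing \(p_1(M_i)=b_i\in\ZZ_{|r|}\) under these identifications, we conclude: two homotopy equivalent Eschenburg spaces are tangentially homotopy equivalent if and only if \(b_1=b_2\) in \(\ZZ_{|r|}\).

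It remains to compare this criterion with the homotopy and homeomorphism classifications of Eschenburg spaces due to Kreck-Stolz and Kruggel, both of which are given by closed-form expressions in the parameter vector \((k,l)\) --- involving the order \(|r|\), the linking form, the invariant \(b\in\ZZ_{|r|}\) introduced above, and the Kreck-Stolz invariants \(s_1,s_2,s_3\in\QQ/\ZZ\); see \cite{CEZ07} for a convenient tabulation. For the Aloff-Wallach subfamily one verifies, directly from these formulas, that two homotopy equivalent Aloff-Wallach spaces are already homeomorphic; combined with the implications noted above, this shows that the three notions coincide. For part (a) one must exhibit two parameter vectors, each satisfying the normal-form condition \eqref{eq:Esch:standard-parameters} --- so that the spaces carry positively curved submersion metrics --- with identical homotopy invariants but \(b_1\neq b_2\); for part (b), a pair satisfying \eqref{eq:Esch:standard-parameters} with identical homotopy invariants \emph{and} \(b_1=b_2\) but with one of the remaining homeomorphism invariants differing. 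Since all invariants are explicit functions of \((k,l)\), and since by \cite[Lemma~1.4]{CEZ07} every positively curved Eschenburg space is diffeomorphic to one in normal form, the search for such pairs is a finite computation over parameter vectors of bounded \(|r|\); this is implemented in the code accompanying this paper \cite{zenodo}, and the pairs it produces are recorded in \cref{table:Esch:phenomena}.

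The main difficulty is not the (routine) bookkeeping of invariants but the constraint to the positively curved subfamily: a priori it is unclear that the normal-form condition \eqref{eq:Esch:standard-parameters} is compatible with realizing either phenomenon, and especially with the delicate requirement in (b) that the full invariant \(b\) --- itself a homeomorphism invariant --- agree while the spaces remain in distinct homeomorphism classes. The explicit examples of \cref{table:Esch:phenomena}, found by computer search, are what settle this; the preceding reductions merely certify that the tabulated pairs have the claimed properties.
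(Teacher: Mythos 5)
Your overall route is the same as the paper's: reduce tangential homotopy equivalence to ``homotopy equivalent with equal \(p_1\in H^4\cong\ZZ_{r}\)'' via \cref{prop:w2p1} (using that \(r\) is odd, so \(H^4\) has no \(2\)-torsion, and that \(w_1,w_2\) are homotopy invariants) --- this is exactly \cref{Esch:the} --- and then settle (a) and (b) by the Kruggel\slash Kreck--Stolz classification together with explicit pairs satisfying \eqref{eq:Esch:standard-parameters} found by a bounded computer search, which is precisely how \cref{table:Esch:phenomena} was produced. Your observation that the identification \(H^4(M)\cong\ZZ_r\) is canonical (the generator \(u^2\) involves no sign choice), so that comparing \(p_1\) does not depend on the chosen homotopy equivalence, also matches the paper.

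The one genuine gap is the Aloff--Wallach statement. You claim that one ``verifies, directly from these formulas, that two homotopy equivalent Aloff--Wallach spaces are already homeomorphic.'' This is not a routine or finite verification: the homotopy classification only forces agreement of \(r\), \(s\) and \(s_{22}=2rs_2\), whereas homeomorphism requires agreement of \(s_2\) itself (together with \(p_1\), which does vanish for Aloff--Wallach spaces, so the tangential part of your argument is fine). Passing from agreement of \(2rs_2\) to agreement of \(s_2\) across the whole infinite Aloff--Wallach family is a nontrivial number-theoretic theorem; it is Dickinson--Shankar's result \cite[Proposition~A.1]{Sha02}, which the paper cites rather than reproves, and your proposal asserts it without proof or reference. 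A more minor caveat: closed formulas for the Kreck--Stolz invariants of \(M(k,l)\) are only available under Kruggel's condition~(C) (see \cite[\S 2]{CEZ07}), so ``all invariants are explicit functions of \((k,l)\)'' needs that qualification; this does not affect checking the specific pairs in \cref{table:Esch:phenomena}, which satisfy condition~(C), but it does limit what a bounded search can certify in general.
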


For Aloff-Wallach spaces, the equivalence of the notions of homotopy equivalence and homeomorphism is due to Dickinson and Shankar \cite{Sha02}.  Slightly weaker versions of the statements for positively curved Eschenburg spaces, namely the existence of pairs of positively curved Eschenburg spaces which are homotopy equivalent but not homeomorphic,  is known by \cite{Sha02,CEZ07}.  Also, there are known pairs of positively curved Eschenburg spaces \cite[Table~2]{CEZ07} and even of Aloff-Wallach spaces \cite[Corollary on p.\,467]{KS91} which are homeomorphic but not diffeomorphic.  The situation is illustrated in \cref{fig:iso-implications}.
\begin{figure}[b]
  \newcommand*{\strict}[1]{\textcolor{gray}{$\not\hspace{-0.5pt}\Leftrightarrow$ #1}}

\begin{tikzpicture}[x=2cm,y=2cm,commutative diagrams/every diagram]
  \path[align=center,font=\sffamily]
  node at (0,1) (diff) {diffeo-\\morphic}
  node at (-1,0) (homeo) {homeo-\\morphic}
  node at (1,0) (the) {tangentially\\homotopy\\equivalent}
  node at (0,-1) (he) {homotopy\\equivalent\\[6pt]positively curved\\Eschenburg spaces};

  \path[commutative diagrams/.cd,every arrow,every label,font=\tiny]
  (diff)  edge[commutative diagrams/Rightarrow] node[swap] {\strict{\cite{CEZ07}}} (homeo)
  (diff)  edge[commutative diagrams/Rightarrow] (the)
  (homeo) edge[white,text=black] node {\Cref{rem: homeo type s mfds are tangentially homotopic}} (the)
  (homeo) edge[commutative diagrams/Rightarrow] node[swap] {\strict{Prop.~\ref{prop:Eschenburg-phenomena} (b)}} (the)
  (the)   edge[commutative diagrams/Rightarrow] node {\strict{Prop.~\ref{prop:Eschenburg-phenomena} (a)}}
  (he)
  (homeo) edge[commutative diagrams/Rightarrow] node[swap] {\strict{\cite{Sha02,CEZ07}}} (he);

  \path[align=center,font=\sffamily]
  node at (3.5,1) (diff) {diffeo-\\morphic}
  node at (2.5,0) (homeo) {homeo-\\morphic}
  node at (4.5,0) (the) {tangentially\\homotopy\\equivalent}
  node at (3.5,-1) (he) {homotopy\\equivalent\\[6pt]Aloff-Wallach spaces\\~};

  \path[commutative diagrams/.cd,every arrow,every label,font=\tiny]
  (diff)  edge[commutative diagrams/Rightarrow] node[swap] {\strict{\cite{KS91}}} (homeo)
  (diff)  edge[commutative diagrams/Rightarrow] (the)
  (homeo) edge[white,text=black] 
  (the)
  (homeo) edge[commutative diagrams/Leftrightarrow] node[swap] {Prop.~\ref{prop:Eschenburg-phenomena}}
  (the)
  (the)   edge[commutative diagrams/Leftrightarrow] 
  (he)
  (homeo) edge[commutative diagrams/Leftrightarrow] node[swap] {\cite{Sha02}}
  (he);
\end{tikzpicture}
  \caption{Implications between different notions of isomorphism for positively curved Eschenburg spaces and for the subfamily of Aloff-Wallach spaces, respectively.  All indicated implications \((\Rightarrow)\) are strict. The references in gray refer to counterexamples to the inverse implications.}\label{fig:iso-implications}
\end{figure}

\medskip

\newcolumntype{C}{>{$}c<{$}}
\newcolumntype{R}{>{$}r<{$}}
\newcolumntype{L}{>{$}l<{$}}
\newcolumntype{P}[1]{>{\raggedleft\arraybackslash}p{3ex}}
\newlength{\pairskip}
\setlength{\pairskip}{6pt}
\newlength{\tableaddwidth}
\setlength{\tableaddwidth}{4em}
\newcommand{\minus}{-}
\newcommand{\plus}{\phantom{+}}

\begin{table}
  \begin{adjustwidth}{-\tableaddwidth}{-\tableaddwidth}\centering
  \begin{tabular}{r@{$(\,$}R@{, }R@{, }R@{, }R@{, }R@{, \;\;}R@{$\,)$ \quad}RRRRRCC}
    \toprule
      & k_1  & k_2  & k_3  & l_1  & l_2  & l_3 & \multicolumn{1}{C}{\r} & \multicolumn{1}{C}{\quad s} & \multicolumn{1}{C}{\Sigma} & p_1     & s_{22}                 & s_{2}                             \\
    \midrule
    \multicolumn{12}{l}{Homotopy equivalent but not tangentially homotopy equivalent:}                                                                                                                    \\[\pairskip]
      & 8    & 7    & -5   & 6    & 4    & 0   & 43                     & -21                         & 1                         & 13      & \nicefrac{\plus  1}{6} & \nicefrac{\minus  59}{516} \\
      & 21   & 21   & -2   & 20   & 20   & 0   & 43                     & -21                         & 1                         & 26      & \nicefrac{\plus  1}{6} & \nicefrac{\plus   55}{516}        \\[\pairskip]
      & 12   & 10   & -8   & 9    & 5    & 0   & 101                    & -50                         & -1                        & 21      & \nicefrac{\plus  1}{6} & \nicefrac{\plus  565}{1212}       \\
      & 50   & 50   & -2   & 49   & 49   & 0   & 101                    & -50                         & -1                        & 55      & \nicefrac{\plus  1}{6} & \nicefrac{\minus 125}{1212}       \\[\pairskip]
      & 19   & 17   & -7   & 16   & 13   & 0   & 137                    & -68                         & -1                        & 23      & \nicefrac{\plus  1}{6} & \nicefrac{\minus 743}{1644}       \\
      & 68   & 68   & -2   & 67   & 67   & 0   & 137                    & -68                         & -1                        & 73      & \nicefrac{\plus  1}{6} & \nicefrac{\plus  241}{1644}       \\[\pairskip]
      & 30   & 26   & -6   & 25   & 25   & 0   & 181                    & -26                         & -1                        & 164     & \nicefrac{\minus 1}{6} & \nicefrac{\minus 193}{2172}       \\
      & 16   & 16   & -10  & 13   & 9    & 0   & 181                    & 26                          & 1                         & 85      & \nicefrac{\plus  1}{6} & \nicefrac{\minus 443}{2172}       \\[\pairskip]
      & 15   & 14   & -11  & 12   & 6    & 0   & 181                    & -43                         & 0                         & 35      &                      0 & \nicefrac{\minus  55}{181} \\
      & 45   & 43   & -4   & 42   & 42   & 0   & 181                    & -43                         & 0                         & 89      &                      0 & \nicefrac{\plus   36}{181}        \\[\pairskip]
      & 16   & 13   & -11  & 12   & 6    & 0   & 183                    & -91                         & 0                         & 33      & \nicefrac{\minus 1}{6} & \nicefrac{\minus 991}{2196}       \\
      & 91   & 91   & -2   & 90   & 90   & 0   & 183                    & -91                         & 0                         & 96      & \nicefrac{\minus 1}{6} & \nicefrac{\plus  413}{2196}       \\
    \multicolumn{6}{C}{\vdots}                                                                                                                                                                            \\[\pairskip]
    \midrule
    \multicolumn{12}{l}{Tangentially homotopy equivalent but not homeomorphic:}                                                                                                                           \\[\pairskip]
      & 58   & 54   & -34  & 39   & 39   & 0   & 2\,197                 & 1\,032                      & 0                          & 845     & \nicefrac{\plus 1}{2} & \nicefrac{\plus   1147}{8788}     \\
      & 45   & 41   & -47  & 39   & 0  & 0   & 2\,197                 & 1\,032                      & 0                          & 845     & \nicefrac{\plus 1}{2} & \nicefrac{\minus  3247}{8788}     \\[\pairskip]
      & 81   & 69   & -84  & 56   & 10  & 0   & 7\,571                 & 74                          & 0                          & 5\,352  & \nicefrac{\plus 1}{2} & \nicefrac{\minus  9219}{30284}    \\
      & 108  & 63   & -69  & 56   & 46  & 0   & 7\,571                 & 74                          & 0                          & 5\,352  & \nicefrac{\plus 1}{2} & \nicefrac{\plus   5923}{30284}    \\[\pairskip]
      & 88   & 61   & -107 & 30   & 12  & 0   & 10\,935                & -5\,179                     & 0                          & 1\,368  & \nicefrac{\minus 1}{6} & \nicefrac{\plus  55529}{131220}   \\
      & 77   & 77   & -106 & 30   & 18  & 0   & 10\,935                & 5\,179                      & 0                          & 1\,368  & \nicefrac{\plus 1}{6} & \nicefrac{\minus 11789}{131220}   \\[\pairskip]
      & 79   & 58   & -131 & 6   & 0  & 0   & 13\,365                & -1\,183                     & 0                          & 72      & \nicefrac{\plus 1}{3} & \nicefrac{\minus  3794}{8019}     \\
      & 92   & 47   & -127 & 6   & 6  & 0   & 13\,365                & 1\,183                      & 0                          & 72      & \nicefrac{\minus 1}{3} & \nicefrac{\minus  1552}{8019}     \\[\pairskip]
      & 115  & 79   & -116 & 72   & 6  & 0   & 13\,851                & 1\,184                      & 0                          & 9\,576  & \nicefrac{\minus 1}{6} & \nicefrac{\minus 77167}{166212}   \\
      & 128  & 107  & -97  & 72   & 66  & 0   & 13\,851                & -1\,184                     & 0                          & 9\,576  & \nicefrac{\plus 1}{6} & \nicefrac{\minus 61343}{166212}   \\[\pairskip]
      & 1112 & 1111 & -13  & 1110 & 1100 & 0   & 14\,467                & 2\,246                      & -1                         & 11\,744 & \nicefrac{\minus 1}{6} & \nicefrac{\plus  68945}{173604}   \\
      & 127  & 103  & -106 & 88   & 36  & 0   & 14\,467                & -2\,246                     & 1                          & 11\,744 & \nicefrac{\plus 1}{6} & \nicefrac{\plus  17857}{173604}   \\[\pairskip]
      & 188  & 176  & -82  & 145  & 137  & 0   & 16\,625                & 3\,341                      & 0                          & 6\,608  & \nicefrac{\plus 1}{2} & \nicefrac{\minus 25007}{66500}    \\
      & 176  & 164  & -94  & 163  & 83  & 0   & 16\,625                & 3\,341                      & 0                          & 6\,608  & \nicefrac{\plus 1}{2} & \nicefrac{\plus ~8243}{66500}     \\
    \multicolumn{6}{C}{\vdots}                                                                                                                                                                            \\[\pairskip]
    \bottomrule
    \addlinespace
  \end{tabular}
  \end{adjustwidth}
  \caption{The ``first'' six pairs of homotopy equivalent but not tangentially homotopy equivalent pairs of positively curved Eschenburg spaces (top half of table), and the ``first'' six pairs of tangentially homotopy equivalent but non-homeomorphic pairs of such spaces. ``First'' means that these are the pairs of spaces satisfying \eqref{eq:Esch:standard-parameters} with smallest value of \(\r\).
  }\label{table:Esch:phenomena}
\end{table}

Given the concrete examples in \cref{table:Esch:phenomena}, \cref{prop:Eschenburg-phenomena} can be treated as an application of the classification of Eschenburg spaces.  We will first discuss this classification and then say a few words about how the examples were obtained.

Classifications of Eschenburg spaces are known up to various notions of equivalence.  Most relevant for us are the classifications up to homotopy and homeomorphism due to Kruggel \cite{Kru97,Kru98,Kru05}.  The simplest homotopy invariant used in these classifications is obtained via cohomology.  Namely, all Eschenburg spaces are type-\(r\)-manifolds in the following sense \cite[Proposition 36]{Esch82}:

\begin{definition}[\cite{Kru97}]\label{DEF: type r} A \textbf{type-\(r\)-manifold} is a simply connected closed \(7\)-manifold \(M\) whose cohomology has the following structure:
\begin{compactitem}[]
\item \(H^2(M) \cong \ZZ\), generated by some class \(u\)
\item \(H^4(M) \cong \ZZ_r\), generated by \(u^2\), for some finite integer \(r\geq 1\)
\item \(H^5(M) \cong \ZZ\), generated by some class \(v\)
\item \(H^7(M) \cong \ZZ\), generated by \(uv\)
\item \(H^d(M) = 0 \) in all other degrees \(d > 0\)
\end{compactitem}
\end{definition}
In particular, the order \(r\) of the fourth cohomology group is a homotopy invariant of Eschenburg spaces.  A homeomorphism invariant used in Kruggel's classification is the first Pontryagin class \(p_1\in H^4(M)\).  Note that we can \emph{canonically} identify \(H^4(M)\) with \(\ZZ_r\) as the generator \(u^2\) does not depend on any (sign) choices.  The additional invariants used by Kruggel are the linking number and certain invariants \(s_i\) developed by Kreck and Stolz for arbitrary type-\(r\)-manifolds \cite{KS93}.  Closed expressions for the Kreck-Stolz invariants of Eschenburg spaces \(M(k,l)\) are known only for spaces whose parameter vector \((k,l)\) satisfies a certain numerical ``condition~(C)'' \cite[\S\,2]{CEZ07}.  However, spaces violating this condition are relatively rare, see \cref{eg:C-failures} below.  One last homotopy invariant of positively curved Eschenburg spaces worth mentioning is the value of \(\Sigma := k_1+k_2+k_3 \mod 3\) \cite{Mil00}\cite[Prop.~12]{Sha02}.  This invariant is not used in Kruggel's classification, but it can still be useful when looking for the kind of phenomena we are studying here.

\Cref{table:invariants} attempts to give an overview over the different invariants, while \cref{table:classification} summarizes the classification results.
Note that the displayed classification of Eschenburg spaces up to \emph{tangential} homotopy equivalence is immediate from the classification up to homotopy equivalence:

\begin{table}
\begin{adjustwidth}{-\tableaddwidth}{-\tableaddwidth}\centering
\begin{tabular}{lllp{3.7cm}l}
  \toprule
                  & definition                          &                         & interpretation                                                                      & invariance          \\
  \midrule
  \(\r\)          & \(\abs{\sigma_2(k) - \sigma_2(l)}\) & \(\in\ZZ\)              & order of \(H^4(M(k,l))\)                                                            & homotopy            \\
  \(s\)           & \;\(\sigma_3(k) - \sigma_3(l)\)     & \(\in \ZZ_{\r}^\times\) & \(\nicefrac{-s^{-1}}{(\sigma_2(k) - \sigma_2(l))}\in\QQ/\ZZ\) is the linking number & or.~homotopy        \\
  \(\Sigma\)      & \;\(\sigma_1(l)\)                   & \(\in \ZZ_3\)           & --                                                                                  & or.~homotopy        \\
  \(p_1\)         & \;\(2\sigma_1(l)^2 - 6\sigma_2(l)\) & \(\in \ZZ_{\r}\)        & first Pontryagin class                                                              & tangential homotopy \\
  \(s_{22}\)      & (\(2rs_2\))                         & \(\in\QQ/\ZZ\)          & --                                                                                  & or.~homotopy        \\
  \(s_2\)         & (non-polynomial)                    & \(\in \QQ/\ZZ\)         & (Kreck-Stolz invariant)                                                             & or.~homeomorphism   \\
\bottomrule
\addlinespace
\end{tabular}
\end{adjustwidth}

\caption{Some invariants of an Eschenburg space \(M(k,l)\).  Our notation mostly follows the notation used in \cite{CEZ07}.  In the explicit formulae for the invariants, \(\sigma_i\) denotes the \(i\)th elementary symmetric polynomial, \ie \(\sigma_1(k) = k_1 + k_2 + k_3\), \(\sigma_2(k) = k_1 k_2 + k_2 k_3 + k_1 k_3\) and \(\sigma_3(k) = k_1 k_2 k_3\). The oriented invariants (``or.'') change signs under a change of orientation.
}
\label{table:invariants}
\end{table}

\begin{proposition}\label{Esch:the}
  Two Eschenburg spaces are tangentially homotopy equivalent if and only if they are homotopy equivalent and their first Pontryagin classes agree.
\end{proposition}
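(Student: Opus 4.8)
The plan is to deduce the proposition directly from the stable classification of real vector bundles over seven-dimensional CW complexes in \cref{prop:w2p1}, applied to the base \(M\) itself. Recall that an Eschenburg space \(M\) is a simply connected --- hence orientable --- closed \(7\)-manifold with \(H^2(M)\cong\ZZ\), generated by some class \(u\), and \(H^4(M)\cong\ZZ_r\) generated by \(u^2\), where \(r=|H^4(M)|\) is odd \cite{Esch82}; in particular \(H^4(M)\) contains no \(2\)-torsion. Since the generator \(u^2\) of \(H^4(M)\) involves no sign choice, we identify \(H^4(M)\cong\ZZ_r\) canonically, and it is with respect to this identification that the phrase ``their first Pontryagin classes agree'' is to be understood. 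The first thing I would record is the compatibility of this identification with homotopy equivalences: if \(f\colon M\to M'\) is any homotopy equivalence of Eschenburg spaces, then \(f^*\colon H^2(M')\to H^2(M)\) is an isomorphism of infinite cyclic groups, so \(f^*u'=\pm u\) and hence \(f^*(u'^2)=u^2\); thus \(f^*\colon H^4(M')\to H^4(M)\) is the identity \(\ZZ_r\to\ZZ_r\). Consequently, for a fixed homotopy equivalence \(f\), the equality \(p_1(M)=p_1(M')\) in \(\ZZ_r\) is the same as the equality \(f^*p_1(TM')=p_1(TM)\) in \(H^4(M)\).

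The forward implication is then immediate: a tangential homotopy equivalence \(f\colon M\to M'\) is in particular a homotopy equivalence, and since the first Pontryagin class is a stable invariant of real vector bundles, \(p_1(TM)=p_1(f^*TM')=f^*p_1(TM')\), which by the previous paragraph means \(p_1(M)=p_1(M')\).

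For the converse, suppose \(f\colon M\to M'\) is a homotopy equivalence with \(p_1(M)=p_1(M')\), equivalently \(f^*p_1(TM')=p_1(TM)\). I want to show that \(TM\) and \(f^*TM'\) are stably isomorphic, and I would do this by checking that they agree in the three invariants appearing in the second half of \cref{prop:w2p1}. First, \(w_1(TM)=0=w_1(f^*TM')=f^*w_1(TM')\), since \(M\) and \(M'\) are orientable. Second, \(p_1(TM)=f^*p_1(TM')\) by hypothesis. Third, \(w_2(TM)=f^*w_2(TM')=w_2(f^*TM')\), because the total Stiefel--Whitney class of the tangent bundle of a closed manifold is a homotopy invariant: by Wu's formula it is obtained by applying Steenrod squares to the Wu class, and the Wu class is characterised by the \(\ZZ_2\)-cohomology ring together with the fundamental class mod \(2\), all of which are preserved by the degree-\((\pm1)\) map \(f\). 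Since \(H^4(M)\cong\ZZ_r\) has no \(2\)-torsion, \cref{prop:w2p1} now yields that \(TM\) and \(f^*TM'\) are stably equivalent; and since both bundles have rank \(7=\dim M=\dim M'\), comparing ranks forces the two stabilising trivial summands to have equal rank, so \(TM\oplus\RR^k\cong f^*TM'\oplus\RR^k\) for some \(k\), and \(f\) is a tangential homotopy equivalence.

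Overall this is a routine consequence of \cref{prop:w2p1}; the only points that need care are the classical input that \(|H^4|\) is odd for every Eschenburg space, which is exactly what allows the \(2\)-torsion-free case of \cref{prop:w2p1} to be applied on the base \(M\), and the homotopy invariance of \(w_2(TM)\) via Wu's formula. I do not anticipate any genuine obstacle beyond this bookkeeping.
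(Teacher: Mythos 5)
Your proof is correct and follows essentially the same route as the paper: the paper's own argument simply notes that \(r=|H^4(M)|\) is odd for every Eschenburg space, so the second statement of \cref{prop:w2p1} (as used in \cref{rem: homeo type s mfds are tangentially homotopic}) applies, with \(w_1,w_2\) handled by homotopy invariance and \(p_1\) by hypothesis. Your additional bookkeeping --- the canonical identification of \(H^4\) with \(\ZZ_r\) via \(u^2\), the Wu-formula justification of the homotopy invariance of \(w_2\), and the rank comparison to pass from stable equivalence to a genuine stable isomorphism --- just makes explicit what the paper leaves implicit.
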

\begin{proof}
  The invariant \(\r\), the order of \(H^4(M)\), is odd for any Eschenburg space \(M\) \cite[above Prop.~1.7]{CEZ07}.
  In particular, \(H^4(M)\) contains no two-torsion, so that the claim follows directly from the second statement in \cref{rem: homeo type s mfds are tangentially homotopic}.
\end{proof}

\begin{proof}[Proof of \cref{prop:Eschenburg-phenomena}]
The classification results summarized in \cref{table:classification} and the examples in \cref{table:Esch:phenomena} immediately imply the claims concerning general positively curved Eschenburg spaces.

As for the statement concerning Aloff-Wallach spaces, the equivalence of the notions of homeomorphism and homotopy equivalence was proven in \cite[Proposition A.1]{Sha02}. Finally, the equivalence of the notions of homotopy equivalence and tangential homotopy equivalence follows from \cref{Esch:the} since \(p_1= 0 \) for Aloff-Wallach spaces (see \cref{table:invariants}).
\end{proof}

\begin{table}
\newcommand{\pC}{{\phantom{C}}}
\begin{adjustwidth}{-\tableaddwidth}{-\tableaddwidth}\centering
\begin{tabular}{lcll}
\toprule
 invariants {\dots} agree      & \(\Leftrightarrow\) & spaces agree up to \dots                 & references                          \\
\midrule
\(\r, s, s_{22}\)            & \(\Leftrightarrow\)   & oriented homotopy equivalence            & \cite{Kru98,CEZ07} \\
\(\r, s, s_{22}, p_1\)   & \(\Leftrightarrow\)   & oriented tangential homotopy equivalence  &  \cref{Esch:the}                    \\
 \(\r, s, s_2, p_1\)     & \(\Leftrightarrow\)   & oriented homeomorphism                   & \cite{Kru05,CEZ07} \\
\bottomrule
\addlinespace
\end{tabular}
\end{adjustwidth}
\caption{Classification of Eschenburg spaces satisfying Kruggel's condition (C), up to various notions of equivalence. For example, the first line says that two such spaces are homotopy equivalent via an orientation preserving equivalence if and only if their invariants \(\r\), \(s\), and \(s_{22}\) agree.  For a more extensive and detailed summary, see \cite[Thm~2.3]{CEZ07}.}
\label{table:classification}
\end{table}

To find the examples listed in \cref{table:Esch:phenomena}, we followed the basic strategy outlined in \cite{CEZ07}.  That is, we employed a computer program that first generates all positively curved Eschenburg spaces satisfying \eqref{eq:Esch:standard-parameters} with \(\r\) bounded by some upper bound \(R\), and then looks for families of spaces whose invariants agree.  More precisely, given an upper bound \(R\in \NN\), the main steps of the program are:
\begin{enumerate}[(1)]
\item Generate all parameter vectors \((k,l)\) satisfying \eqref{eq:Esch:standard-parameters} with \(\r \leq R\).
\item Among these parameter vectors, find all maximal families of two or more parameter vectors for which the invariants \(\r\), \(s\) and \(\Sigma\) agree, up to simultaneous sign changes of \(s\) and \(\Sigma\).
  (This intermediate step is necessary to avoid time-consuming computations of the invariant \(s_{22}\) for all generated parameter vectors.)
\item Within those families, find all maximal (sub)families of two or more parameter vectors for which, in addition, the invariant \(s_{22}\) agrees, again up to simultaneous sign changes of \(s\), \(\Sigma\) and \(s_{22}\).
  This results in a list of families of parameter vectors that describe homotopy equivalent positively curved Eschenburg spaces.
\item Within the remaining families, find all maximal (sub)families of two or more parameter vectors for which, in addition, the first Pontryagin class agrees.
  This results in a list of families of parameter vectors that describe tangentially homotopy equivalent positively curved Eschenburg spaces.
\item Within the remaining families, find all maximal (sub)families of two or more parameter vectors for which, in addition, the invariant \(s_2\) agrees (up to simultaneous sign changes of \(s\), \(\Sigma\), \(s_{22}\) and \(s_2\)).
  This results in a list of families of parameter vectors that describe homeomorphic Eschenburg spaces.
\end{enumerate}
The examples in \cref{table:Esch:phenomena} were obtained by comparing the different lists generated by the program.  Unfortunately, the C-code referred to in \cite{CEZ07} seems to have been lost, so we reimplemented the whole program from scratch and added the additional functionality we needed (in particular steps (3--5)). The new program, written completely in C\texttt{++}, is freely available \cite{zenodo}, and we encourage the reader to play around with it.  Invariants of individual spaces can alternatively be computed using some Maple code that is still available from Wolfgang Ziller's homepage.

The following empirical data obtained using the program is supplied purely for the reader's amusement.

\begin{statistics}
  Within the range of \(\r \leq 100\,000\), there are
  \begin{compactitem}[]
  \item
    101\,870\,124 -- 101\,872\,253 distinct homotopy classes,
  \item
    103\,602\,166 distinct tangential homotopy classes, and
  \item
    103\,602\,344 distinct homeomorphism classes
  \end{compactitem}
  of positively curved Eschenburg spaces satisfying~\eqref{eq:Esch:standard-parameters}.  We do not know the exact number of distinct homotopy classes due to the failure of Kruggel's condition~C in some cases.
\end{statistics}

\begin{examples}[Condition~C failures]\label{eg:C-failures}
  Examples of positively curved Eschenburg spaces for which Kruggel's condition~C fails are discussed in \cite{CEZ07}.  An example of such a space with minimal value of \(\r\) among those satisfying \eqref{eq:Esch:standard-parameters}, taken from \cite[\S2]{CEZ07}, is displayed as space \(M_0\) in \cref{table:examples}.  The spaces \((M_1,M_2)\) in \cref{table:examples} constitute a pair of positively curved Eschenburg spaces for which the invariants \(r\), \(s\), \(\Sigma\) and \(p_1\) agree, while we cannot compare the Kreck-Stolz invariants due to the failure of condition~C for one of the spaces.
  The value \(\r = 141\,151\) is minimal among all such pairs of spaces satisfying~\eqref{eq:Esch:standard-parameters}.
\end{examples}

\begin{example}[Larger exotic families]
  The literature on Eschenburg spaces only studies \emph{pairs} of exotic structures, for example pairs of homotopy equivalent spaces.  However, there also seem to be lots of triples, quadruples, etc.\ of homotopy equivalent Eschenburg spaces.  For example, the spaces \(M_3\), \(M_4\), \dots, \(M_8\) in \cref{table:examples} constitute a six-tuple of homotopy equivalent, positively curved Eschenburg spaces, no two of which are tangentially homotopy equivalent.  In contrast, we have not been able to find a single triple of tangentially homotopy equivalent but non-homeomorphic Eschenburg spaces.  There appear to be no such triples of spaces satisfying \eqref{eq:Esch:standard-parameters} with \(\r\leq 300\,000\).
  %
\end{example}

\begin{table}
  \begin{adjustwidth}{-\tableaddwidth}{-\tableaddwidth}\centering
  \begin{tabular}{R@{$\; M(\,$}R@{, }R@{, }R@{, }R@{, }R@{, \;\;}R@{$\,)$ \quad}RRRRRCC}
    \toprule
             & k_1 & k_2 & k_3  & l_1 & l_2 & l_3 & \multicolumn{1}{C}{\r} & \multicolumn{1}{C}{\quad s} & \multicolumn{1}{C}{\Sigma} & \multicolumn{1}{C}{p_1} & s_{22}                  & s_{2}                              \\
    \midrule
    M_0 :=   & 35  & 21  & -34  & 12  & 10  & 0   & 1\,289                 & 499                         & 1                         & 248                     & \multicolumn{2}{c}{[condition C fails]}                      \\[\pairskip]
    M_1 :=   & 440 & 168 & -320 & 159 & 129 & 0   & 141\,151               & -58\,968                    & 0                         & 42\,822                 & 0                       & \nicefrac{\minus 35047}{141151}    \\
    M_2 :=   & 400 & 168 & -352 & 165 & 51  & 0   & 141\,151               & -58\,968                    & 0                         & 42\,822                 & \multicolumn{2}{c}{[condition C fails]}                      \\[\pairskip]
    M_3 :=   & 410 & 259 & -457 & 192 & 20  & 0   & 203\,383               & -79\,707                    & -1                        & 66\,848                 & \nicefrac{ \minus 1}{6} & \nicefrac{ \plus  614891}{2440596} \\
    M_4 :=   & 548 & 497 & -335 & 374 & 336 & 0   & 203\,383               & -79\,707                    & -1                        & 50\,833                 & \nicefrac{ \minus 1}{6} & \nicefrac{ \minus 621835}{2440596} \\
    M_5 :=   & 370 & 287 & -457 & 126 & 74  & 0   & 203\,383               & -79\,707                    & -1                        & 24\,056                 & \nicefrac{ \minus 1}{6} & \nicefrac{ \plus  404657}{2440596} \\
    M_6 :=   & 610 & 491 & -325 & 462 & 314 & 0   & 203\,383               & -79\,707                    & -1                        & 130\,561                & \nicefrac{ \minus 1}{6} & \nicefrac{ \plus  123017}{2440596} \\
    M_7 :=   & 650 & 491 & -305 & 432 & 404 & 0   & 203\,383               & -79\,707                    & -1                        & 147\,241                & \nicefrac{ \minus 1}{6} & \nicefrac{ \plus  659411}{2440596} \\
    M_8 :=   & 548 & 469 & -355 & 432 & 230 & 0   & 203\,383               & -79\,707                    & -1                        & 76\,945                 & \nicefrac{ \minus 1}{6} & \nicefrac{ \minus 947995}{2440596} \\
    \bottomrule
    \addlinespace
  \end{tabular}
  \end{adjustwidth}
  \caption{Some examples of positively curved Eschenburg spaces.}
  \label{table:examples}
\end{table}


\section{Proof of \cref{THM: nondiffeo souls with positive curv}}\label{sec: Proof of THM A}

We are now ready to prove our main result.  By \cref{THM: existence of tangential nonhomeo pairs}, there exist pairs of positively curved Eschenburg spaces \(M_1\), \(M_2\) that are tangentially homotopy equivalent but non-homeomorphic. Pick one such pair and a tangential homotopy equivalence \(f\colon M_1\to M_2\).  We claim that \(M:=M_2\) has the property stated in \cref{THM: construction for THM A}. Indeed, let $E\to M_2$ be an arbitrary real vector bundle of rank $\geq 8$.  Denote by \(f^*E\to M_1\) its pullback along~\(f\). The induced map $h\colon f^*E\to E$ is still a tangential homotopy equivalence, see for example the proof of Proposition~1.3 in \cite{GoZ}.  Now we need the following well-known corollary of a classical result of Siebenmann; it appears, for example, as Theorem~10.1.6 in \cite{TW}, where it is dubbed ``Work Horse Theorem'':

\begin{theorem}[Siebenmann, Belegradek]\label{WHT}
\emph{Let \(E_1\to M_1\) and \(E_2\to M_2\) be two vector bundles of the same rank \(l\) over two closed manifolds of the same dimension~\(n\). Suppose that \(l\geq 3\) and \(l>n\).  Then any tangential homotopy equivalence \(h\colon  E_1 \to E_2\) is homotopic to a diffeomorphism.}
\end{theorem}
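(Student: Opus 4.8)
The plan is to convert the statement, which concerns the \emph{open} manifolds $E_i$, into a question about compact manifolds with boundary, and then to run surgery theory together with the $s$-cobordism theorem; the tangential hypothesis is exactly what is needed to annihilate the resulting obstruction. One may assume throughout that $\dim E_i = n+l\geq 6$, since the remaining cases (which force $n\leq 1$, so that $E_i$ is a bundle over a point or a circle) are elementary.

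\emph{Step 1 (compactification).} Fix a fibre metric on $E_i$ and replace $E_i$ by the associated closed unit disc bundle $DE_i$, a compact $(n+l)$-manifold whose boundary is the sphere bundle $SE_i\to M_i$ and whose interior is diffeomorphic to $E_i$; the zero section $M_i\hookrightarrow DE_i$ is a homotopy equivalence, and the end of $E_i$ is literally the collar $SE_i\times[0,\infty)$. Because this end is tame and already collared, Siebenmann's theory of ends of open manifolds allows one to deform $h$ to a \emph{proper} homotopy equivalence which is compatible with the two ends, that is, to a homotopy equivalence of compact manifold pairs $\bar h\colon (DE_1,SE_1)\to (DE_2,SE_2)$. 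The end obstruction vanishes because the ends are collared; the Whitehead torsion of $\bar h$ can be absorbed into the collar and is in any case zero in the situations we care about, where the base $M_i$ is simply connected.

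\emph{Step 2 (surgery on the disc bundles and $s$-cobordism).} Since $l\geq 3$ the sphere fibre $S^{l-1}$ is simply connected, so $SE_i\hookrightarrow DE_i$ induces an isomorphism $\pi_1(SE_i)\xrightarrow{\ \cong\ }\pi_1(DE_i)\cong\pi_1(M_i)$ and the pair $(DE_i,SE_i)$ satisfies Wall's $\pi$--$\pi$ condition; and since $l>n$ the inclusion $SE_i\hookrightarrow DE_i$ is highly connected and stable bundle data over the $n$-dimensional base $M_i$ are already unstable. Now view $\bar h$ as a degree-one normal map: one has $TDE_i\cong\pi_i^*(TM_i\oplus E_i)$, and the defining stable bundle isomorphism $h^*TE_2\cong TE_1$ of a tangential homotopy equivalence provides compatible bundle data identifying $\bar h^*TDE_2$ with $TDE_1$. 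This normal structure together with the $\pi$--$\pi$ condition produces a normal cobordism from $\bar h$ to the identity of $(DE_2,SE_2)$ which, after surgery in its interior, is an $s$-cobordism; by the $s$-cobordism theorem it is a product, and reading off the construction shows that $\bar h$ is homotopic to a diffeomorphism of pairs $(DE_1,SE_1)\to (DE_2,SE_2)$. Restricting this diffeomorphism to interiors yields a diffeomorphism $E_1\to E_2$ homotopic to $h$.

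The main obstacle — and the reason it is sensible to invoke the result as a black box — is Step 1: upgrading the bare homotopy equivalence $h$ of the open total spaces to a homotopy equivalence of the compactifying disc-bundle pairs that behaves well at infinity. This rests on Siebenmann's thesis and the $s$-cobordism theorem and is where the hypothesis $\dim E_i\geq 6$ enters. Step 2, by contrast, is a routine passage through the surgery exact sequence, the hypotheses $l\geq 3$ and $l>n$ furnishing respectively the $\pi$--$\pi$ situation needed to kill the surgery obstruction and the connectivity needed for the tangential data to control the normal bundle information.
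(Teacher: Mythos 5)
The decisive gap is your Step 1. You assert that, because the ends of \(E_i\) are tame and already collared, ``Siebenmann's theory of ends'' lets you deform \(h\) to a proper homotopy equivalence compatible with the ends, hence to a homotopy equivalence of pairs \((DE_1,SE_1)\to(DE_2,SE_2)\). Siebenmann's end theory collars a tame end of a single manifold; it does not make a given map proper, and the assertion is false as stated. Properness is a genuine constraint at infinity: the proper homotopy type of \(E_i\) near infinity is the homotopy type of the sphere bundle \(SE_i\), which is not determined by the homotopy type of \(E_i\) (which is just that of \(M_i\)). Concretely, over \(M_1=M_2=\CP^2\) take \(E_1=\CP^2\times\RR^5\) and \(E_2\) the underlying real bundle of a line bundle \(L\) with \(c_1(L)\) odd, summed with \(\RR^3\): both total spaces are homotopy equivalent to \(\CP^2\), but \(w_2\bigl(T(SE_1)\bigr)\neq 0\) while \(w_2\bigl(T(SE_2)\bigr)=0\), and Stiefel--Whitney classes of closed manifolds are homotopy invariants, so no homotopy equivalence of pairs exists at all. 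Your Step 1 never invokes the tangential hypothesis, and even with it, controlling \(h\) at infinity is precisely the hard content of the theorem rather than a consequence of the ends being collared --- you yourself flag this step as the ``main obstacle''. (Also, the reduction to \(\dim E_i\geq 6\) misses the case \(n=2\), \(l=3\).)

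Step 2 does not repair this and moreover overshoots. With \(l\geq 3\) the \(\pi\)--\(\pi\) condition kills the relevant \(L\)-groups, so the surgery exact sequence identifies the structure set of the pair \((DE_2,SE_2)\) (structures free on the boundary) with the normal invariants \([DE_2,G/\mathrm{O}]\cong[M_2,G/\mathrm{O}]\). Tangentiality of \(\bar h\) only forces its normal invariant to die under \([M_2,G/\mathrm{O}]\to[M_2,\mathrm{BO}]\), i.e.\ to lie in the image of \([M_2,G]\); it need not vanish, so you cannot conclude that \(\bar h\) is pair-homotopic to a diffeomorphism --- and the theorem does not claim this stronger statement: it only provides a diffeomorphism of the open total spaces homotopic to \(h\) through homotopies that need not be proper. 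The paper's argument (following Belegradek and Siebenmann's ``On detecting open collars'') avoids both issues: since \(\dim E_2=n+l>2n\), the composite \(h\circ s_1\colon M_1\to E_2\) is homotopic to an embedding \(g\) by general position; the open-collar theorem (this is where \(l\geq 3\) enters, via \(\pi_1\) at infinity) shows \(E_2\) is the total space of the normal bundle \(N_g\to M_1\); and tangentiality together with \(l>n\) identifies \(N_g\) with \(E_1\) unstably. No compactification or surgery-obstruction analysis is required.
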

\begin{proof}[Proof sketch]
  Note first that we might as well assume \(M_1\) and \(M_2\) to be connected, as we may argue one component at a time.
  For \(n = 0\) or \(n = 1\), the statement can be checked by elementary means.  For \(n\geq 2\), a proof is outlined in \cite{Bel} below Proposition~5, as follows:
  First one observes that the total space \(E\) of a vector bundle of rank \(\geq 3\) over a closed connected manifold \(M\) of dimension \(\geq 2\) satisfies hypothesis (3) in \cite[Theorem~2.2]{Sie69}: it has one end, \(\pi_1\) is essentially constant at \(\infty\), and \(\pi_1(\infty)\to\pi_1(E)\) is an isomorphism.
  Thus, if such a total space contains an embedded closed connected manifold \(S\) such that the embedding \(S \hookrightarrow E\) is a homotopy equivalence, then \(E\) admits the structure of a vector bundle over \(S\), with the given embedding as zero section.  Slight generalizations of the arguments used in the proof of \cite[Theorem 2.3]{Sie69} then complete the proof:  For \(h\colon E_1\to E_2\) as above and \(s_1\colon M_1\to E_1\) the zero section, the homotopy equivalence \(h\circ  s_1\colon M_1\to E_2\) is homotopic to a smooth embedding \(g\colon M_1\to E_2\) by general position arguments \cite[Ch.\,2, Theorems~2.6 and 2.13]{Hirsch}.  It follows that \(E_2\) has the structure of a vector bundle over \(M_1\) and can be identified with the normal bundle \(N_g\) of the embedding~\(g\).  On the other hand, the assumption that \(h\) is a \emph{tangential} homotopy equivalence implies that the vector bundles \(N_g\) and \(E_1\) over \(M_1\) are stably isomorphic, and since their rank \(l\) is greater than \(n\) it follows that \(N_g\cong M_1\) (see the reference given in the proof of \cref{cor: sum of line bundles}).
\end{proof}

Returning to the proof of \cref{THM: nondiffeo souls with positive curv}, we find that the total spaces of our bundles $f^*E\to M_1$ and $E\to M_2$ are diffeomorphic.
By \cref{THM: bundles over Esch}, they admit two metrics with non-negative sectional curvature, one with soul isometric to $M_1$ and the other with soul isometric to $M_2$.  This completes the proof of \cref{THM: construction for THM A}\slash\cref{THM: nondiffeo souls with positive curv}.

\medskip

The pairs of souls we have constructed have codimension \(\geq 8\).  This is probably not optimal.  All we know is that any pair of souls as in \cref{THM: nondiffeo souls with positive curv} necessarily has codimension at least three:  according to \cite{BKS11}, any two codimension-two souls of a simply connected open manifold are homeomorphic.
There is, however, the following result on positively-curved codimension-two souls due to Belegradek, Kwasik and Schultz:

\begin{theorem}[\cite{BKS15}]\label{THM: BKS15 nontrivial line bundles}
There exist Eschenburg spaces $M$ with the following property:  the total space of every non-trivial complex line bundle over $M$ admits a pair of non-diffeomorphic, homeomorphic souls of positive sectional curvature.
\end{theorem}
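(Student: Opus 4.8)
The plan is to take as the two souls a pair \(M,M'\) of positively curved Eschenburg spaces that are homeomorphic but not diffeomorphic. Such pairs are already on record --- see \cite[Table~2]{CEZ07}, or \cite[Corollary on p.~467]{KS91} among Aloff--Wallach spaces --- and both members are again Eschenburg spaces. Fix one such pair together with an orientation-preserving homeomorphism \(\phi\colon M\to M'\). As \(M\) and \(M'\) are type-\(r\)-manifolds, \(H^2(M)\cong H^2(M')\cong\ZZ\), so \(\phi^*\) sends the complex line bundle of first Chern class \(a\in H^2(M')\) to the one of first Chern class \(\pm a\in H^2(M)\); since complex conjugation leaves the underlying real bundle (hence the total space) unchanged, every non-trivial complex line bundle \(L\) over \(M\) corresponds under \(\phi\) to a non-trivial complex line bundle \(L'\) over \(M'\). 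Everything then reduces to two claims: (i) the total spaces \(E_L\to M\) and \(E_{L'}\to M'\) are diffeomorphic; and (ii) each of them carries a metric of non-negative sectional curvature whose soul is isometric to the corresponding Eschenburg space.

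Claim~(ii) is an application of the constructions of \cref{sec:curvature}. The quotient map \(SU(3)\to M\) exhibits \(M\) as the base of a principal \(S^1\)-bundle whose total space is \(2\)-connected and which carries an \(S^1\)-invariant metric of non-negative sectional curvature descending to the given positively curved metric on \(M\); the same holds for \(M'\). Hence \cref{PROP:metrics type r} equips the total space of \emph{every} complex line bundle over \(M\) (resp.\ over \(M'\)) with a non-negatively curved metric whose soul is isometric to \(M\) (resp.\ to \(M'\)). Transporting the metric on \(E_{L'}\) through the diffeomorphism of~(i) yields a second metric on \(E_L\), and now \(M\) and \(M'\) sit inside \(E_L\) as a pair of non-diffeomorphic, homeomorphic, positively curved souls.

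The heart of the matter is the diffeomorphism \(E_L\cong E_{L'}\) of~(i): a cancellation phenomenon, in which twisting by a non-trivial plane bundle dissolves the difference between the smooth structures of \(M\) and \(M'\). I would argue as follows. By the Kreck--Stolz classification of such manifolds (see \cite{KS93}, and \cite[Thm~2.3]{CEZ07} in the Eschenburg case), two homeomorphic oriented type-\(r\)-manifolds become diffeomorphic after connected sum with a suitable homotopy \(7\)-sphere; thus \(M'\cong M\mathbin{\#}\Sigma\). The connected sum is supported in a ball \(B\subset M\) over which \(L\) is trivial, so \(E_{L'}=E_L(M\mathbin{\#}\Sigma)\) is obtained from \(E_L=E_L(M)\) by excising the piece lying over \(B\) --- a standard \(D^7\times\RR^2\) --- and regluing the piece of \(E_L(M\mathbin{\#}\Sigma)\) lying over the twisted disk \(\Delta^7:=\Sigma\smallsetminus\operatorname{int}B\), namely \(\Delta^7\times\RR^2\). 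It therefore suffices to show that \(\Delta^7\times\RR^2\) is diffeomorphic, rel boundary, to \(D^7\times\RR^2\), so that the regluing changes nothing. This is exactly where the non-triviality of \(L\) has to be used: a non-trivial line bundle supplies the room that, for a trivial one, Milnor had to manufacture by taking a product with an extra \(\RR^3\) --- recall that \(\Sigma^7\times\RR^3\cong\mathbb{S}^7\times\RR^3\) \cite{Mil61}, whereas \(\Sigma^7\times\RR^2\) need not be standard. Concretely I would first verify that \(\partial(\Delta^7\times D^2)\) is the standard \(8\)-sphere --- which reduces, after some handle-trading, to the classical fact that \(\Sigma^7\times\mathbb{S}^1\cong\mathbb{S}^7\times\mathbb{S}^1\) --- then conclude \(\Delta^7\times D^2\cong D^9\) by the \(h\)-cobordism theorem, and finally read off the statement for \(\Delta^7\times\RR^2\).

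I expect this last step --- identifying the local model, and in particular making precise that it is the twisting of \(L\), rather than any extra Euclidean factors, that effects the cancellation --- to be the main obstacle; the remaining ingredients are routine. The non-triviality hypothesis is genuinely necessary: for a trivial rank-\(2\) bundle the statement should fail, since \(M\times\RR^2\) and \(M'\times\RR^2\) need not be diffeomorphic, mirroring the necessity of the extra \(\RR^3\) in Milnor's cancellation. Finally, the souls constructed here have codimension two, which --- as codimension-two souls of simply connected open manifolds are always homeomorphic \cite{BKS11} --- is sharp in this direction, and is why \cref{THM: nondiffeo souls with positive curv} has to call for higher codimension.
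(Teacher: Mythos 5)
Your overall architecture is the right one (and matches the paper's up to the key citation): take a homeomorphic but non-diffeomorphic pair of positively curved Eschenburg spaces, match non-trivial line bundles through the homeomorphism (the \(\pm\)-ambiguity on \(c_1\) being harmless since conjugation preserves the underlying real bundle), produce the two non-negatively curved metrics with souls \(M\) and \(M'\) via the circle-bundle construction of \cref{PROP:metrics type r}, and reduce everything to a diffeomorphism of the two total spaces. Claim~(ii) is fine, as is the identification \(M'\cong M\#\Sigma\) with \(\Sigma\in\Theta_7=bP_8\) via Kreck--Stolz. The gap is claim~(i): this is exactly the point at which the paper does not argue from scratch but quotes the cancellation theorem of \cite{BKS15} (Theorem~12.1 there, reproduced after \cref{THM: BKS15 nontrivial line bundles}; the whole statement is essentially the case \(m=0\) of Theorem~1.4 of \cite{BKS15}), and your proposed substitute for it does not work.

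Concretely, you reduce \(E_L(M)\cong E_L(M\#\Sigma)\) to the local statement that \(\Delta^7\times\RR^2\) is diffeomorphic rel boundary to \(D^7\times\RR^2\), where \(\Delta^7=\Sigma\smallsetminus\operatorname{int}B\). But this local picture is blind to the non-triviality of \(L\): the bundle is trivial over \(B\) and over \(\Delta^7\) whatever \(L\) is, so if the rel-boundary statement you need were true, the identical excise-and-reglue argument would give \(M\times\RR^2\cong(M\#\Sigma)\times\RR^2\) for the \emph{trivial} rank-two bundle as well --- which you yourself (correctly) expect to fail, and which would make the non-triviality hypothesis in \cite{BKS15} redundant. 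Moreover, the steps you sketch only yield the absolute statement \(\Delta^7\times D^2\cong D^9\), which is vacuous: \(\Delta^7\) is already diffeomorphic to \(D^7\) by the h-cobordism theorem, and all of the exoticness of \(M\#\Sigma\) resides in the gluing diffeomorphism of \(S^6\) (equivalently, of \(S^6\times\RR^2\) after crossing with the fibre), i.e.\ precisely in the rel-boundary question that your handle-trading and \(\Sigma^7\times S^1\cong\mathbb{S}^7\times S^1\) argument never address. The way a non-trivial Euler class absorbs this gluing obstruction is a global phenomenon and is the actual content of \cite{BKS15}; unless you import that result (as the paper does), or supply a genuinely different proof of it, the argument is incomplete at its central step.
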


Indeed, this is essentially the case \(m=0\) of Theorem~1.4 in \cite{BKS15}; the exact statement may easily be extracted from the proof of this theorem given there (see page~41).  This result does not rely on the ``Work Horse Theorem'' stated as \cref{WHT} above.  Rather, the main topological tool that goes into it is Theorem~12.1 of loc.\ cit.:

\emph{Let \(M_1\), \(M_2\) be two closed simply connected manifolds of dimension $n\geq 5$ with $n\neq 1\mod 4$, such that $M_1$ is the connected sum of $M_2$ with a homotopy sphere that bounds a parallelizable manifold. Let \(L_2\to M_2\) be a non-trivial line bundle, and let $L_1\to M_1$ be its pullback via the standard homeomorphism $M_1\to M_2$. Then the total spaces $L_1$ and $L_2$ are diffeomorphic.}


\section{Moduli spaces of Riemannian metrics}\label{sec: mmoduli spaces}

Given a manifold $N$, denote by $\mathcal{R}(N)$ the space of all (complete) Riemannian metrics on $N$. We refer to \cite[Chapter 1]{TW} for basic properties of spaces of metrics. They can be topologized in different ways.  Following \cite{BKS11}, we consider:
\begin{itemize}
\item[(u)] the topology of uniform $C^\infty$-convergence
\item[(c)] the topology of uniform $C^\infty$-convergence on compact subsets
\end{itemize}
The space of metrics equipped with one of these topologies will be denote \(\mathcal{R}^{u}(N)\) and $\mathcal{R}^{c}(N)$, respectively.  The diffeomorphism group $\text{Diff}(N)$ acts on $\mathcal{R}(N)$ by pulling back metrics. This action is continuous with respect to both topologies.  The quotient spaces are called the \textbf{moduli spaces of metrics} and will be denoted by $\mathcal{M}^c(N)$ and $\mathcal{M}^{u}(N)$, respectively. While $\mathcal{M}^c(N)$ is always path-connected, $\mathcal{M}^{u}(N)$ can have uncountably many connected components if $N$ is non-compact.

For an open manifold $N$, we are interested in the subspace $\mathcal{R}_{K\geq 0}(N)$ of $\mathcal{R}(N)$ consisting of all metrics with non-negative sectional curvature. Pulling back metrics preserves curvature bounds, so we can consider the corresponding moduli spaces  $\mathcal{M}_{K\geq 0}^{u}(N)$ and $\mathcal{M}_{K\geq 0}^{c}(N)$. Connectedness properties of these spaces have been the subject of much research; see \cite{Tu} and \cite[Chapter 10]{TW} for recent surveys on this topic.

Our main result \cref{THM: nondiffeo souls with positive curv} suggests to also consider the subspace of those metrics with non-negative sectional curvature $K\geq 0$ whose souls $S$ have positive sectional curvature $K^S>0$.  We will denote this subspace and the the corresponding moduli space by $\mathcal{R}_{K\geq 0,K^S> 0}(N)$ and $\mathcal{M}_{K\geq 0,K^S> 0}(N)$, with the appropriate superscript again indicating the topology.
Let us examine how the results above are reflected in the connected properties of these subspaces.  We first consider the two topologies separately and then discuss the special case of codimension one souls, for which both topologies coincide.

\subsection*{Topology of uniform convergence}\label{SS: uniform convergence}
The following result is an immediate consequence of Theorem 1.5 in \cite{BKS11}:
\begin{quote}
  \emph{Let $g_1,g_2\in \mathcal{R}_{K\geq 0}^{u}(N)$ with souls $S_1,S_2$. If $S_1,S_2$ are non-diffeomorphic, then the equivalence classes of $g_1,g_2$ lie in different path components of $\mathcal{M}_{K\geq 0}^{u}(N)$.}%
\end{quote}
So $\mathcal{M}_{K\geq 0,K^S> 0}^u(N)$ is not path-connected for any $N$ as in \cref{THM: construction for THM A} or \cref{THM: BKS15 nontrivial line bundles}.

\subsection*{Topology of uniform convergence on compact subsets}\label{SS: uniform convergence on compact subsets}

The following result is an immediate consequence of Lemma 6.1 in \cite{KPT}:
\begin{quote}
  \emph{Let $g_1,g_2\in \mathcal{R}_{K\geq 0}^{c}(N)$ with souls $S_1,S_2$.  Assume that the normal bundles of \(S_1\) and \(S_2\) in \(N\) both have non-trivial rational Euler class. If $S_1,S_2$ are non-diffeomorphic, then the equivalence classes of $g_1,g_2$ lie in different path components of $\mathcal{M}_{K\geq 0}^{c}(N)$.}
\end{quote}
For dimensional reasons, the Euler classes of the spaces in \cref{THM: construction for THM A} vanish. On the other hand, the Euler classes of the spaces in \cref{THM: BKS15 nontrivial line bundles} are non-zero by assumption. Thus,  $\mathcal{M}_{K\geq 0,K^S> 0}^c(N)$ is not path-connected when $N$ is a manifold as in \cref{THM: BKS15 nontrivial line bundles}.

\subsection*{Codimension one souls}\label{SS: codimension one moduli}
In the special case where the souls have codimension one in $N$ both topologies coincide. More precisely, the following result is Proposition 2.8 in \cite{BKS11}:
\begin{quote}
\emph{If $N$ admits a metric with non-negative curvature and codimension-one soul, then the obvious map \(\mathcal{M}_{K\geq 0}^{u}(N)\to\mathcal{M}_{K\geq 0}^{c}(N)\) is a homeomorphism. Moreover, the natural map
\[\textbf{soul: } \mathcal{M}_{K\geq 0}^{u}(N)\to\coprod_i\mathcal{M}_{K\geq 0}(S_i)\]
that assigns to each metric the metric of its soul is a homeomorphism as well, where \(\coprod\) denotes disjoint union over are all possible diffeomorphism types  \(S_i\) for souls of \(N\).}
\end{quote}
When $N$ is simply-connected all codimension-one souls $S$ are diffeomorphic, so that the map \(\textbf{soul: } \mathcal{M}_{K\geq 0}^{u}(N)\to\mathcal{M}_{K\geq 0}(S)\) is a homeomorphism. We can use this result to obtain further open manifolds \(N\) such that $\mathcal{M}_{K\geq 0,K^S> 0}^{u}(N)$ is not path-connected:
Kreck and Stolz showed in \cite{KS93} that there are Eschenburg spaces $M$ for which the moduli space $\mathcal{M}_{K> 0}(M)$ of metrics with \emph{positive} sectional curvature is not path-connected. By considering Riemannian products with the real line we find that $\mathcal{M}_{K\geq 0,K^S> 0}^{u}(M\times\RR)$ is not path-connected either.


\end{document}